\DeclareMathAlphabet{\mathpzc}{OT1}{pzc}{m}{it}
\newtheorem{remark}[theorem]{Remark}
\numberwithin{equation}{section}
\title{Some applications of weighted norm inequalities to the error analysis of PDE constrained optimization problems\thanks{HA has been supported in part by the NSF grant DMS-1521590. EO has been supported in part by CONICYT through FONDECYT project 3160201. AJS has been supported in part by NSF grant DMS-1418784.}}
\author{Harbir Antil\thanks{Department of Mathematical Sciences, George Mason University, Fairfax, VA 22030, USA. \texttt{hantil@gmu.edu}},
\and
Enrique Ot\'arola\thanks{Departamento de Matem\'atica, Universidad T\'ecnica Federico Santa Mar\'ia, Valpara\'iso, Chile.
\texttt{enrique.otarola@usm.cl}}
\and
Abner J.~Salgado\thanks{Department of Mathematics, University of Tennessee, Knoxville, TN 37996, USA. \texttt{asalgad1@utk.edu}}}
\date{Draft version of \today.}
\begin{document}

\maketitle

\begin{abstract}
The purpose of this work is to illustrate how the theory of Muckenhoupt weights, Muckenhoupt weighted Sobolev spaces and the corresponding weighted norm inequalities can be used in the analysis and discretization of PDE constrained optimization problems. We consider: a linear quadratic constrained optimization problem where the state solves a nonuniformly elliptic equation; a problem where the cost involves pointwise observations of the state and one where the state has singular sources, \eg point masses. For all three examples we propose and analyze numerical schemes and provide error estimates in two and three dimensions. While some of these problems might have been considered before in the literature, our approach allows for a simpler, Hilbert space-based, analysis and discretization and further generalizations.
\end{abstract}

\begin{keywords}
PDE constrained optimization, Muckenhoupt weights, weighted Sobolev spaces, finite elements, polynomial interpolation in weighted spaces, nonuniform ellipticity, point observations, singular sources.
\end{keywords}
\begin{AMS}
35J15,    %%   Second-order elliptic equations
35J75,    %%   Singular elliptic equations
35J70,    %%   Degenerate elliptic equations
49J20,    %%   Optimal control problems involving partial differential equations
49M25,    %%   Discrete approximations
65D05,    %%   Interpolation
65M12,    %%   Stability and convergence of numerical methods
65M15,    %%   Error bounds
65M60.    %%   Finite elements, Rayleigh-Ritz and Galerkin methods, finite methods
\end{AMS}
%
%%%%%%%%%%%%%%%%%%%%%%%%%%%%%%%%%%%%%%%%%%%%%%%%%%%%%%%%%%%%%%%%%%%%%%%%%%%%%%%%%%%%%%
\section{Introduction}
\label{sec:introduccion}
%%%%%%%%%%%%%%%%%%%%%%%%%%%%%%%%%%%%%%%%%%%%%%%%%%%%%%%%%%%%%%%%%%%%%%%%%%%%%%%%%%%%%%

The purpose of this work is to show how the theory of Muckenhoupt weights, Muckenhoupt weighted Sobolev spaces and weighted norm inequalities can be applied to analyze PDE constrained optimization problems, and their discretizations. These tools have already been shown to be essential in the analysis and discretization of problems constrained by equations involving fractional derivatives both in space and time \cite{AO,AOS}, and here we extend their use to a new class of problems.

We consider three illustrative examples. While some of them have been considered before, the techniques that we present are new and we believe they provide simpler arguments and allow for further generalizations. To describe them let $\Omega$ be an open and bounded polytopal domain of $\R^n$ ($n \in \{2,3\}$) with Lipschitz boundary $\partial \Omega$. We will be concerned with the following problems:
\begin{enumerate}[$\bullet$]
  \item 
  \textbf{Optimization with nonuniformly elliptic equations.} Let $\omega$ be a weight, that is, a positive and locally integrable function and $y_d \in L^2(\omega,\Omega)$. Given a regularization parameter $\lambda>0$, we define the cost functional
  \begin{equation}
  \label{eq:defofJA}
    J_\A(y,u) = \frac12 \| y - y_d \|_{L^2(\omega,\Omega)}^2 + \frac\lambda2 \| u \|_{L^2(\omega^{-1},\Omega)}^2.
  \end{equation}
  We are then interested in finding $\min J_\A$ subject to the \emph{nonuniformly elliptic problem}
  \begin{equation}
  \label{eq:defofPDEA}
    -\DIV( \A \GRAD y ) = u \text{ in } \Omega, \qquad y = 0 \text{ on } \partial\Omega,
  \end{equation}
  and the control constraints
  \begin{equation}
  \label{eq:cc}
    u \in \bbU_{\A},
  \end{equation}
  where $\bbU_{\A}$ is a  nonempty, closed and convex subset of $L^2(\omega^{-1},\Omega)$. The main source of difficulty and originality here is that the matrix $\A$ is not uniformly elliptic, but rather satisfies
  \begin{equation}
  \label{eq:nonunifellip}
    \omega(x) |\xi|^2 \lesssim \xi^\intercal \cdot \A(x) \cdot \xi \lesssim \omega(x) |\xi|^2 \quad \forall \xi \in \R^n
  \end{equation}
  for almost every $x\in \Omega$. Since we allow the weight to vanish or blow up, this nonstandard ellipticity condition must be treated with the right functional setting.
  
  Problems such as \eqref{eq:defofPDEA} arise when applying the so-called Caffarelli-Silvestre extension for fractional diffusion \cite{AO,AOS,CS:07,NOS,NOS2}, when dealing with boundary controllability of parabolic and hyperbolic degenerate equations \cite{MR2373460,MR3171770,MR3227458} and in the numerical approximation of elliptic problems involving measures \cite{MR3264365,NOS2}. In addition, invoking Rubio de Francia's extrapolation theorem \cite[Theorem 7.8]{MR1800316} one can argue that this is a quite general PDE constrained optimization problem with an elliptic equation as state constraint, since \emph{there is no $L^p$, only $L^2$ with weights}.

  \item 
  \textbf{Optimization with point observations.} Let $\calZ \subset \Omega$ with $\# \calZ < \infty$. Given a set of prescribed values $\{ y_z\}_{z \in \mathcal{Z}}$, a regularization parameter $\lambda >0$, and the cost functional
  \begin{equation}
  \label{eq:defofJp}
    J_\calZ(y,u) = \frac12 \sum_{z \in \calZ} | y(z) - y_z |^2 + \frac\lambda2 \| u \|_{L^2(\Omega)}^2,
  \end{equation}
  the problem under consideration reads as follows: Find $\min J_\calZ$ subject to 
  \begin{equation}
  \label{eq:defofPDEp}
    -\LAP y = u \text{ in } \Omega, \qquad y = 0 \text{ on } \partial\Omega,
  \end{equation}
  and the control constraints
  \begin{equation}
  \label{eq:cc2}
  u \in \U_\calZ,
  \end{equation}
  where $\U_\calZ$ is a nonempty, closed and convex subset of $L^2(\Omega)$. In contrast to standard elliptic PDE constrained optimization problems, the cost functional \eqref{eq:defofJp} involves point evaluations of the state. Note that these evaluations are not required for the state equation \eqref{eq:defofPDEp} to be well posed. Additional assumptions must be made for this to make sense and, as will be seen below, the point evaluations of the state $y$ in \eqref{eq:defofJp} lead to a subtle formulation of the adjoint problem.
  
  Problem \eqref{eq:defofJp}--\eqref{eq:cc2} finds relevance in numerous applications where the observations are carried out at specific locations. For instance, in the so-called calibration problem with American options \cite{MR2137495}, in the optimal control of selective cooling of steel \cite{MR1844451}, in the active control of sound \cite{MR2086168,ACS} and in the active control of vibrations \cite{ACV,MR2525606}; see also \cite{BrettElliott,BEHL,MR3150173,MR2536481,MR2193509} for other applications.
  
%   In addition, 
  The point observation terms in the cost \eqref{eq:defofJp}, tend to enforce the state $y$ to have the fixed value $y_z$ at the point $z$. Consequently, \eqref{eq:defofJp}--\eqref{eq:cc2} can be understood as a penalty version of a PDE constrained optimization problem where the state is constrained at a collection of points. We refer the reader to \cite[Section 3.1]{BrettElliott} for a precise description of this connection and to \cite{MR3071172} for the analysis and discretization of an optimal control problem with state constraints at a finite number of points.

  Despite its practical importance, to the best of our knowledge, there are only two references where the approximation of \eqref{eq:defofJp}--\eqref{eq:cc2} is addressed: \cite{BrettElliott} and \cite{MR3449612}. In both works the key observation, and main source of difficulty, is that the adjoint state for this problem is only in $W_0^{1,r}(\Omega)$ with $r \in (\tfrac{2n}{n+2}, \tfrac{n}{n-1})$. With this functional setting, the authors of \cite{BrettElliott} propose a fully discrete scheme which discretizes the control explicitly using \emph{piecewise linear elements}. For $n=2$, the authors obtain a $\calO(h)$ rate of convergence for the optimal control in the $L^2$-norm provided the control and the state are discretized using meshes of size $\calO(h^2)$ and $\calO(h)$, respectively; see \cite[Theorem~5.1]{BrettElliott}. This condition immediately poses two challenges for implementation: First, it requires to keep track of the state and control on different meshes. Second, some sort of interpolation and projection between these meshes needs to be realized. In addition, the number of unknowns for the control is significantly higher, thus leading to a slow optimization solver. The authors of \cite{BrettElliott} were unable to extend these results to $n=3$. Using the so-called variational discretization approach \cite{Hinze:05}, the control is implicitly discretized and the authors were able to prove that the control converges with rates $\calO(h)$ for $n=2$ and $\calO(h^{\frac{1}{2}-\epsilon})$ for $n=3$. In a similar fashion, the authors of \cite{MR3449612} use the variational discretization concept to obtain an implicit discretization of the control and deduce rates of convergence of $\calO(h)$ and $\calO(h^{\frac{1}{2}})$ for $n=2$ and $n=3$, respectively. A residual-type a posteriori error estimator is introduced, and its reliability is proven. However, there is no analysis of the efficiency of the estimator.

  In Section~\ref{sec:points} below we introduce a fully discrete scheme where we discretize the control with piecewise constants; this leads to a smaller number of degrees of freedom for the control in comparison to the approach of \cite{BrettElliott}. We circumvent the difficulties associated with the adjoint state by working in a weighted $H^1$-space and prove near optimal rates of convergence for the optimal control: $\calO(h|\log h|)$ for $n=2$ and $\calO(h^\frac{1}{2}|\log h|^2)$ for $n=3$, respectively. In addition, we provide pointwise error estimates for the approximation of the state: $\calO(h|\log h|)$ for $n=2$ and $\calO(h^{\frac{1}{2}}|\log h|^2)$ for $n=3$.
  
  \item \textbf{Optimization with singular sources.} Let $\calD \subset \Omega$ be linearly ordered and  with cardinality $l= \# \calD < \infty$. Given a desired state $y_d \in L^2(\Omega)$ and a regularization parameter $\lambda>0$, we define the cost functional
  \begin{equation}
  \label{eq:defofJd}
    J_\delta(y,\mathbf{u}) = \frac12 \| y - y_d \|_{L^2(\Omega)}^2 + \frac\lambda2 \| \mathbf{u} \|_{\R^l}^2.
  \end{equation}
  We shall be concerned with the following problem: Find $\min J_\delta$ subject to
  \begin{equation}
  \label{eq:defofPDEd}
    -\LAP y = \sum_{z \in \calD} u_z \delta_z \text{ in } \Omega, \qquad y = 0 \text{ on } \partial\Omega,
  \end{equation}
  where $\delta_z$ is the Dirac delta at the point $z$ and 
  \begin{equation}
  \label{eq:cc3}
   \mathbf{u} = \{u_z\}_{z \in \calD} \in \bbU_\delta,
  \end{equation}
  where $\bbU_\delta \subset \R^l$ with $\bbU_\delta$, again, nonempty, closed and convex. Notice that since, for $n>1$, $\delta_z \notin H^{-1}(\Omega)$, the solution $y$ to \eqref{eq:defofPDEd} does not belong to $H^1(\Omega)$. Consequently, the analysis of the finite element method applied to such a problem is not standard \cite{Casas:85,NOS2,Scott:73}. We rely on the weighted Sobolev space setting described and analyzed in \cite[Section 7.2]{NOS2}.
  
  The state \eqref{eq:defofPDEd}, in a sense, is dual to the adjoint equation for \eqref{eq:defofJp}--\eqref{eq:defofPDEp}, where the adjoint equation has Dirac deltas on the right hand side. The optimization problem \eqref{eq:defofJd}--\eqref{eq:defofPDEd} is of relevance in applications where one can specify a control at a finitely many pre-specified points. For instance, references \cite{MR2086168,ACS} discuss applications within the context of the active control of sound and \cite{ACV,MR2718176,MR2525606} in the active control of vibrations; see also \cite{MR3268059,MR3150173,MR3116646}.
  
 An analysis of problem \eqref{eq:defofJd}--\eqref{eq:cc3} is presented in \cite{MR3225501}, where the authors use the variational discretization concept to derive error estimates. They show that the control converges with a rate of $\calO(h)$ and $\calO(h^{1/2})$ in two and three dimensions, respectively. Their technique is based on the fact that the state belongs to $W_0^{1,r}(\Omega)$ with $r \in (\tfrac{2n}{n+2}, \tfrac{n}{n-1})$. In addition, under the assumption that $y_d \in L^\infty(\Omega)$ they improve their results and obtain, up to logarithmic factors, rates of $\calO(h^2)$ and $\calO(h)$. Finally, we mention that \cite{MR2974716, MR3072225} study a PDE constrained optimization problem without control constraints, but where the controls is a regular Borel measure.
 
 In Section~\ref{sec:delta} we present a fully discrete scheme for which we provide rates of convergence for the optimal control: $\calO(h^{2-\epsilon})$ in two dimensions and $\calO(h^{1-\epsilon})$ in three dimensions, where $\epsilon>0$. We also present rates of convergence for the approximation error in the state variable.
\end{enumerate}

Before we embark in further discussions, we must remark that while the introduction of a weight as a technical instrument does not seem to be completely new, the techniques that we use and the range of problems that we can tackle is. For instance, for integro-differential equations  where the kernel $g$ is weakly singular, the authors of \cite{MR1306580} study the well-posedness of the problem in the weighted $L^2(g,(-r,0)),$ space. Numerical approximations for this problem with the same functional setting were considered in \cite{MR1135762} where convergence is shown but no rates are obtained. These ideas were extended to neutral delay-differential equations in \cite{MR3064275,MR2018123} where a weight is introduced in order to renorm the state space and obtain dissipativity of the underlying operator. In all these works, however, the weight is essentially assumed to be smooth and monotone except at the origin where it has an integrable singularity \cite{MR1306580,MR1135762} or at a finite number of points where it is allowed to have jump discontinuities \cite{MR3064275,MR2018123}. All these properties are used to obtain the aforementioned results. In contrast, our approach hinges \emph{only} on the fact that the introduced weights belong to the Muckenhoupt class $A_2$ (see Definition~\ref{def:defofAp} below) and the pertinent facts from real and harmonic analysis and approximation theory that follow from this definition. Additionally we obtain convergence rates which, up to logarithmic factors, are optimal with respect to regularity. Finally we must point out that the class of problems we study is quite different from those considered in the references given above.

Our presentation will be organized as follows. Notation and general considerations will be introduced in Section~\ref{sec:notation}. Section~\ref{sec:A} presents the analysis and discretization of problem \eqref{eq:defofJA}--\eqref{eq:cc}. Problem \eqref{eq:defofJp}--\eqref{eq:cc2} is studied in Section~\ref{sec:points}. The analysis of problem \eqref{eq:defofJd}--\eqref{eq:cc3} is presented in Section~\ref{sec:delta}. Finally, in Section~\ref{sec:NumExp}, we illustrate our theoretical developments with a series of numerical examples.

\section{Notation and preliminaries}
\label{sec:notation}

Let us fix notation and the setting in which we will operate. In what follows $\Omega$ is a convex, open and bounded domain of $\R^n$ ($n \geq 1$) with polytopal boundary. The handling of curved boundaries is somewhat standard, but leads to additional technicalities that will only obscure the main ideas we are trying to advance. By $A \lesssim B$ we mean that there is a nonessential constant $c$ such that $A \leq c B$. The value of this constant might change at each occurrence.

\subsection{Weights and weighted spaces}
\label{sub:weights}
Throughout our discussion we call a \emph{weight} a function $\omega \in L^1_{loc}(\R^n)$ such that $\omega(x)>0$ for \mae $ x \in \R^n$. In particular we are interested in the so-called \emph{Muckenhoupt} weights \cite{MR1800316,Turesson}.

\begin{definition}[Muckenhoupt class]
\label{def:defofAp}
Let $r \in (1,\infty)$ and $\omega$ be a weight. We say that $\omega \in A_r$ if
\[
  C_{r,\omega} := \sup_B \left( \fint_B \omega(x) \diff x \right)\left( \fint_B \omega^{1/(1-r)}(x) \diff x \right)^{r-1} < \infty,
\]
where the supremum is taken over all balls $B \subset \R^n$.
\end{definition}

From the fact that $\omega \in A_r$ many fundamental consequences for analysis follow. For instance, the induced measure $\omega \diff x$ is not only doubling, but also \emph{strong doubling} (\cf \cite[Proposition 2.2]{NOS2}). We introduce
% \HA{using}
the \emph{weighted Lebesgue spaces}
\[
  L^r(\omega,\Omega) = \left\{ v \in L^0(\Omega) : \int_\Omega |v(x)|^r \omega(x) \diff x < \infty \right\}  ,
\]
and note that \cite[Proposition 2.3]{NOS2} shows that their elements are distributions, therefore we can
% \HA{(\cf )} \HA{HA: I think this reference was incorrect, please double check.} so that its elements are \HA{distributions, we can} 
define \emph{weighted Sobolev spaces}
\[
  W^{k,r}(\omega,\Omega) = \left\{ v \in L^r(\omega,\Omega): D^\kappa v \in L^r(\omega,\Omega) \ \forall \kappa : |\kappa | \le k \right\},
\]
which are complete, separable and smooth functions are dense in them (\cf \cite[Proposition 2.1.2, Corollary 2.1.6]{Turesson}). We denote $H^1(\omega,\Omega) = W^{1,2}(\omega,\Omega)$.

We define $W^{k,r}_0(\omega,\Omega)$ as the closure of $C_0^\infty(\Omega)$ in $W^{k,r}(\omega,\Omega)$ and set $H^1_0(\omega,\Omega) = W^{1,2}_0(\omega,\Omega)$. On these spaces, the following \emph{Poincar\'e} inequality holds
\begin{equation}
\label{eq:Poincare}
  \| v \|_{L^r(\omega,\Omega)} \lesssim \| \GRAD v \|_{L^r(\omega,\Omega)} \quad \forall v \in W^{1,r}_0(\omega,\Omega),
\end{equation}
where the hidden constant is independent of $v$, depends on the diameter of $\Omega$ and depends on $\omega$ only through $C_{r,\omega}$.

The literature on the theory of Muckenhoupt weighted spaces is rather vast so we only refer the reader to \cite{MR1800316,NOS2,Turesson} for further results.

\subsection{Finite element approximation of weighted spaces}
\label{sub:FEM}
Since, the spaces $W^{1,r}(\omega,\Omega)$ are separable for $\omega \in A_r$ $(r > 1)$, and smooth functions are dense, it is possible to develop a complete approximation theory using functions that are piecewise polynomial. This is essential, for instance, to analyze the numerical approximation of \eqref{eq:defofPDEA} with finite element techniques. Let us then recall the main results from \cite{NOS2} concerning this scenario.

Let $\T = \{T\}$ be a conforming triangulation (into simplices or $n$-rectangles) of $\Omega$. We denote by $\Tr = \{\T\}$ a family of triangulations, which for simplicity we assume quasiuniform. The mesh size of $\T \in \Tr$ is denoted by $h_\T$. Given $\T \in \Tr$ we define the finite element space
\begin{equation}
\label{eq:V}
 \V(\T) = \left\{ v_\T \in C^0(\bar\Omega) : v_{\T|T} \in \calP(T), \ v_{\T|\partial\Omega} = 0  \right\}, 
\end{equation}
where, if $T$ is a simplex, $\calP(T) = \mathbb{P}_1(T)$ --- the space of polynomials of degree at most one. In the case that $T$ is an $n$-rectangle $\calP(T) = \mathbb{Q}_1(T)$ --- the space of polynomials of degree at most one in each variable. Notice that, by construction, $\V(\T) \subset W^{1,\infty}_0(\Omega) \subset W^{1,r}_0(\omega,\Omega)$ for any $r \in(1,\infty)$ and $\omega \in A_r$. 

The results of \cite{NOS2} show that there exists a quasi-interpolation operator $\Pi_\T: L^1(\Omega) \to \V(\T)$, which is based on local averages over stars and thus well defined for functions in $L^1(\Omega)$. This operator satisfies the following stability and approximation properties:
\begin{align*}
  \| \Pi_\T v \|_{L^r(\omega,\Omega)} &\lesssim \| v \|_{L^r(\omega,\Omega)}, &\forall v &\in L^r(\omega,\Omega), \\
  \| v - \Pi_\T v \|_{L^r(\omega,\Omega)} & \lesssim h_\T \| v \|_{W^{1,r}(\omega,\Omega)}, &\forall v &\in W^{1,r}(\omega,\Omega), \\
  \| \Pi_\T v \|_{W^{1,r}(\omega,\Omega)} &\lesssim \| v \|_{W^{1,r}(\omega,\Omega)}, &\forall v &\in W^{1,r}(\omega,\Omega), \\
  \| v - \Pi_\T v \|_{W^{1,r}(\omega,\Omega)} & \lesssim h_\T \| v \|_{W^{2,r}(\omega,\Omega)}, &\forall v &\in W^{2,r}(\omega,\Omega).
\end{align*}

Finally, to approximate the PDE constrained optimization problems described in Section \ref{sec:introduccion} we define the space of piecewise constants by
\begin{equation}
\label{eq:Upc}
\U(\T) = \left\{ v_\T \in L^\infty(\Omega): v_{\T|T} \in \mathbb{P}_0(T) \right\}.
\end{equation}

\subsection{Optimality conditions}
\label{sub:optim}
To unify the analysis and discretization of the PDE constrained optimization problems introduced and motivated in Section \ref{sec:introduccion} and thoroughly studied in subsequent sections, we introduce a general framework following the guidelines presented in \cite{GH:09,MR2516528,MR2441683,MR0271512,JCarlos,Tbook}. Let $\bbU$ and $\bbH$ be Hilbert spaces denoting the so-called control and observation spaces, respectively. We introduce the state trial and test spaces $\bbY_1$ and $\bbX_1$, and the corresponding adjoint trial and test spaces $\bbY_2$ and $\bbX_2$, which we assume to be Hilbert. In addition, we introduce:
\begin{enumerate}[(a)]
  \item \label{a} A bilinear form $a:(\bbY_1+\bbY_2) \times (\bbX_1 + \bbX_2) \to \R$ which, when restricted to either $\bbY_1\times \bbX_1$ or $\bbY_2 \times \bbX_2$, satisfies the conditions of the BNB theorem; see \cite[Theorem 2.6]{Guermond-Ern}.
  
  \item \label{b} A bilinear form $b: \bbU \times (\bbX_1 + \bbX_2) \to \R$ which, when restricted to either $\bbU \times \bbX_1$ or $\bbU \times \bbX_2$ is bounded. The bilinear forms $a$ and $b$ will be used to describe the state and adjoint equations.
  
  \item \label{c} An observation map $C: \dom(C) \subset \bbY_1 + \bbY_2 \to \bbH$, which we assume linear.
  
  \item \label{d} A desired state $y_d \in \bbH$.

  \item \label{e} A regularization parameter $\lambda > 0$ and a cost functional
  \begin{equation}
  \label{eq:absJ}
    \bbY_1 \times \bbU \ni (y,u) \mapsto J(y,u) = \frac12 \| Cy - y_d \|_{\bbH}^2 + \frac\lambda2 \| u \|_{\bbU}^2.
  \end{equation}
\end{enumerate}
All our problems of interest can be cast as follows. Find $\min J$ subject to: 
\begin{equation}
\label{eq:abstate}
y \in \bbY_1: \quad a(y,v) = b(u,v) \quad \forall v \in \bbX_1,
\end{equation}
and the constraints
\begin{equation}
\label{eq:ccabs}
u \in \Uad,
\end{equation}
where $\Uad \subset \bbU$ is nonempty, bounded, closed and convex. We introduce the control to state map $S : \bbU \rightarrow \bbY_1$ which to a given control, $u \in \bbU$, associates a unique state, $y(u) = Su \in \bbY_1$, that solves the state equation \eqref{eq:abstate}. As a consequence of \eqref{a} and \eqref{b}, the map $S$ is a bounded and linear operator. With this operator at hand we can eliminate the state variable $y$ from \eqref{eq:absJ} and introduce the reduced cost functional
\begin{equation}
\label{eq:redcostabs}
  \bbU \ni u \mapsto j(u) = \frac12 \| CSu - y_d \|_{\bbH}^2 + \frac\lambda2 \| u \|_{\bbU}^2.
\end{equation}
Then, our problem can be cast as: Find $\min j$ over $\Uad$. As described in \eqref{e} we have $\lambda > 0$ so that $j$ is strictly convex. In addition, $\Uad$ is weakly sequentially compact in $\bbU$. Consequently, standard arguments yield existence and uniqueness of a minimizer \cite[Theorem 2.14]{Tbook}. In addition, the optimal control $\usf \in \Uad$ can be characterized by the variational inequality
\[
  j'(\usf)(u - \usf) \ge 0 \quad \forall u \in \Uad,
\]
where $j'(w)$ denotes the G\^ateaux derivative of $j$ at $w$ \cite[Lemma 2.21]{Tbook}. This variational inequality can be equivalently written as
\begin{equation}
\label{eq:VIabs}
  b( u - \usf, \psf) + \lambda (\usf,u - \usf)_\bbU \geq 0 \quad \forall u \in \Uad,
\end{equation}
where $\psf = \psf(\usf)$ denotes the optimal adjoint state and solves
\begin{equation}
\label{eq:absadj}
  \psf \in \bbX_2: \quad a(v,\psf) = (C \ysf - y_d, Cv)_\bbH \quad \forall v \in \bbY_2.
\end{equation}
The optimal state $\ysf = \ysf(\usf) \in \bbY_1$ is the solution to \eqref{eq:abstate} with $u = \usf$. 

\subsection{Discretization of PDE constrained optimization problems}
\label{sub:absdisc}
Let us now, in the abstract setting of Section \ref{sub:optim}, study the discretization of problem \eqref{eq:absJ}--\eqref{eq:ccabs}. Since our ultimate objective is to approximate the problems described in Section \ref{sec:introduccion} with finite element methods, we will study the discretization of \eqref{eq:absJ}--\eqref{eq:ccabs} with Galerkin-like techniques.

Let $h>0$ be a parameter and assume that, for every $h>0$, we have at hand finite dimensional spaces $\bbU^h \subset \bbU$, $\bbX_1^h \subset \bbX_1$, $\bbX_2^h \subset \bbX_2$, $\bbY_1^h \subset \bbY_1$ and $\bbY_2^h \subset \bbY_2$. We define $\Uad^h = \bbU^h \cap \Uad$, which we assume nonempty. About the pairs $(\bbX_i^h,\bbY_i^h)$, for $i=1,2$, we assume that they are such that $a$ satisfies a BNB condition uniformly in $h$; see \cite[\S 2.2.3]{Guermond-Ern}. 
In this setting, the discrete counterpart of \eqref{eq:absJ}--\eqref{eq:ccabs} reads: Find
\begin{equation}
\label{eq:Jdis}
\min J(y_h,u_h)
\end{equation}
subject to the discrete state equation
\begin{equation}
\label{eq:abstateh}
y_h \in \bbY_1^h: \quad a(y_h, v_h ) = b(u_h, v_h) \quad \forall v_h \in \bbX_1^h,
\end{equation}
and the discrete constraints
\begin{equation}
\label{eq:ccdis} 
u_h \in \Uad^h.
\end{equation}

As in the continuous case, we introduce the discrete control to state operator $S_h$, which to a discrete control, $u_h \in \bbU_h$, associates a unique discrete state, $y_h = y_h (u_h) = S_h u_h$, that solves \eqref{eq:abstateh}. $S_h$ is a bounded and linear operator.
% and with this map at hand, we define the reduced and discrete cost functional 
% \begin{equation}
% \label{eq:redcostabsdis}
%   \bbU_h \ni u_h \mapsto j_h (u_h) = \frac12 \| C S_h u_h - y_d \|_{\bbH}^2 + \frac\lambda2 \| u_h \|_{\bbU}^2,
% \end{equation}
% where $C$ is the observation operator defined in \eqref{c}.}

The pair $(\ysf_h,\usf_h) \in \bbY_1^h \times \Uad^h$ is optimal for \eqref{eq:Jdis}--\eqref{eq:ccdis} if $\ysf_h = \ysf_h(\usf_h)$ solves \eqref{eq:abstateh} and the discrete control $\usf_h$ satisfies the variational inequality
\[
  j_h'(\usf_h)(u_h - \usf_h) \ge 0 \quad \forall u_h \in \Uad^h,
\]
or, equivalently, 
\begin{equation}
\label{eq:VIabsh}
  b(u_h - \usf_h, \psf_h) + \lambda (\usf_h, u_h - \usf_h)_\bbU \geq 0 \quad \forall u_h \in \Uad^h,
\end{equation}
where the discrete adjoint variable $\psf_h = \psf_h(\usf_h)$ solves
\begin{equation}
\label{eq:absadjh}
\psf_h \in \bbX_2^h: \quad  a(v_h,\psf_h) = (C \ysf_h - y_d, C v_h )_\bbH \quad \forall v_h \in \bbY_2^h.
\end{equation}

To develop an error analysis for the discrete problem described above, we introduce $\Pi_{\bbU}$ the $\bbU$-orthogonal projection onto $\bbU^h$. We assume that $\Pi_\bbU \Uad \subset \Uad^h$. In addition, we introduce two auxiliary states that will play an important role in the discussion that follows. We define 
% $\hat{\wbysf}_h$ 
% as the solution to \eqref{eq:abstateh} with $u_h$ replaced by $\usf$, \ie
\begin{equation}
\label{eq:hatyh}
\hat{\wbysf}_h \in \bbY_1^h: \quad a(\hat{\wbysf}_h , v_h ) = b(\usf, v_h) \quad \forall v_h \in \bbX_1^h,
\end{equation}
\ie $\hat{\wbysf}_h$ is defined as the solution to \eqref{eq:abstateh} with $u_h$ replaced by $\usf$. We also define
% $\hat{\wbpsf}_h$ as the solution to \eqref{eq:absadjh} with $\ysf_h$ replaced by $\hat{\wbysf}_h$, \ie
\begin{equation}
\label{eq:hatph}
\hat{\wbpsf}_h \in \bbX_2^h: \quad  a(v_h,\psf_h) = (C \hat{\wbysf}_h - y_d, C v_h )_\bbH \quad \forall v_h \in \bbY_2^h,
\end{equation}
this is, $\hat{\wbpsf}_h$ is the solution to \eqref{eq:absadjh} with $\ysf_h$ replaced by $\hat{\wbysf}_h$.

The main error estimate with this level of abstraction reads as follows.

\begin{lemma}[abstract error estimate]
\label{lem:abserror}
Let $(\ysf,\usf) \in \bbY_1 \times \U_{\mathrm{ad}}$ and $(\ysf_h,\usf_h) \in \bbY_1^h \times \bbU_{\mathrm{ad}}^h$ be the continuous and discrete optimal pairs that solve \eqref{eq:absJ}--\eqref{eq:ccabs} and \eqref{eq:Jdis}--\eqref{eq:ccdis}, respectively. 
% Denote by $\hat{\wbysf}_h$ the solution to \eqref{eq:abstateh} with $u_h$ replaced by $\usf$ and, by $\hat{\wbpsf}_h$ the solution to \eqref{eq:absadjh} with $\ysf_h$ replaced by $\hat{\wbysf}_h$. 
If
\begin{equation}
\label{eq:intersection}
  \psf_h - \hat{\wbpsf}_h \in \bbX_1^h \cap \bbX_2^h, \qquad \ysf_h - \hat{\wbysf}_h \in \bbY_1^h \cap \bbY_2^h,
\end{equation}
then, for any $\varepsilon>0$, we have the estimate
\begin{equation}
\label{eq:errest}
  \begin{aligned}
    \| \usf - \usf_h \|_{\bbU}^2 &\leq c_1 \left( \| \psf - \hat{\wbpsf}_h  \|_{\bbX_2}^2 + j'(\usf)(\Pi_\bbU \usf - \usf) 
        + \| \Pi_\bbU \usf - \usf  \|_{\bbU}^2 \right) \\
        &+ \varepsilon \left( \sup_{v_p \in \bbY_2^h} \frac{ (C ( \ysf_h - \hat{\wbysf}_h), C v_p )_\bbH}{\| v_p \|_{\bbY_2}} \right)^2,
  \end{aligned}
\end{equation}
where the constant $c_1$ depends on $\lambda^{-1}$ and $\varepsilon^{-1}$ but does not depend on $h$.
\end{lemma}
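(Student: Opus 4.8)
The plan is to exploit the two variational inequalities \eqref{eq:VIabs} and \eqref{eq:VIabsh} to extract control of $\| \usf - \usf_h \|_{\bbU}^2$, and then to estimate the resulting cross terms using the auxiliary states $\hat{\wbysf}_h$ and $\hat{\wbpsf}_h$. First I would test \eqref{eq:VIabs} with $u = \Pi_\bbU \usf_h \in \Uad^h \subset \Uad$ (legitimate because of the standing assumption $\Pi_\bbU \Uad \subset \Uad^h$) and test the discrete inequality \eqref{eq:VIabsh} with $u_h = \Pi_\bbU \usf$. Adding the two, the strongly monotone terms combine to produce $\lambda \| \usf - \usf_h \|_\bbU^2$ on one side, up to the projection error $\Pi_\bbU \usf - \usf$, while the remaining terms involve $b$ paired with the continuous and discrete adjoint states. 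Rewriting $b(\cdot, \psf) - b(\cdot, \psf_h)$ and inserting $\pm \hat{\wbpsf}_h$ is the key algebraic maneuver: it lets me split the adjoint discrepancy into the \emph{a priori} approximation error $\psf - \hat{\wbpsf}_h$, which lands directly in the first bracket of \eqref{eq:errest}, and the purely discrete difference $\psf_h - \hat{\wbpsf}_h$, which must be handled separately.

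Next I would quantify the discrete adjoint difference $\psf_h - \hat{\wbpsf}_h$. Subtracting \eqref{eq:absadjh} from \eqref{eq:hatph}, this difference solves $a(v_h, \psf_h - \hat{\wbpsf}_h) = (C(\ysf_h - \hat{\wbysf}_h), C v_h)_\bbH$ for all $v_h \in \bbY_2^h$. Here is where the intersection hypothesis \eqref{eq:intersection} is essential: because $\psf_h - \hat{\wbpsf}_h \in \bbX_1^h \cap \bbX_2^h$ and $\ysf_h - \hat{\wbysf}_h \in \bbY_1^h \cap \bbY_2^h$, I can legitimately use these differences as test functions in \emph{either} the state or adjoint discrete equations and invoke the uniform discrete BNB condition. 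This allows me to bound $\| \psf_h - \hat{\wbpsf}_h \|_{\bbX_2}$ by the supremum appearing in the second term of \eqref{eq:errest}, namely $\sup_{v_p \in \bbY_2^h} (C(\ysf_h - \hat{\wbysf}_h), C v_p)_\bbH / \| v_p \|_{\bbY_2}$. The boundedness of $b$ from \eqref{b} then lets me pair this controlled quantity against $\usf - \usf_h$.

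With both pieces in hand, I would collect the cross terms of the form $b(\usf - \usf_h, \psf_h - \hat{\wbpsf}_h)$ and $b(\usf - \usf_h, \psf - \hat{\wbpsf}_h)$ and apply Young's inequality. The first of these absorbs into the $\varepsilon$-weighted supremum term (using the bound on $\psf_h - \hat{\wbpsf}_h$ just obtained), while the second is split so that a multiple of $\| \psf - \hat{\wbpsf}_h \|_{\bbX_2}^2$ enters $c_1$ and a small fraction of $\lambda \| \usf - \usf_h \|_\bbU^2$ is absorbed back into the left-hand side. The term $j'(\usf)(\Pi_\bbU \usf - \usf)$ appears naturally when I rewrite the continuous variational inequality contribution, and the projection error $\| \Pi_\bbU \usf - \usf \|_\bbU^2$ emerges from the strongly monotone terms after the shift $\usf \to \Pi_\bbU \usf$. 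Tracking the $\varepsilon$-dependence carefully yields a constant $c_1 = c_1(\lambda^{-1}, \varepsilon^{-1})$ independent of $h$, as claimed.

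The main obstacle I anticipate is the bookkeeping around the intersection condition \eqref{eq:intersection}: one must be scrupulous about which discrete equation each difference is tested in, since $\hat{\wbysf}_h$ and $\hat{\wbpsf}_h$ are defined through the \emph{state} test space $\bbX_1^h$ and \emph{adjoint} test space $\bbY_2^h$ respectively, whereas $\psf_h$ and $\ysf_h$ live naturally in the optimality system. The hypothesis \eqref{eq:intersection} is precisely what reconciles these, and verifying that the Galerkin orthogonality and the uniform inf--sup constant can be brought to bear on the \emph{difference} of two states solving equations posed over potentially different trial/test pairs is the delicate point. Everything else reduces to Young's inequality and the boundedness assumptions in \eqref{a}--\eqref{b}, so the analytic content is concentrated in that one structural step.
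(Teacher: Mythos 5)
There is a genuine gap, and it sits exactly at the step you yourself flagged as delicate. In the paper, the intersection hypothesis \eqref{eq:intersection} is \emph{not} used so that the discrete BNB condition can bound $\| \psf_h - \hat{\wbpsf}_h \|_{\bbX_2}$ in the cross term paired with $\usf - \usf_h$; it is used to show that this cross term has a \emph{sign} and can be discarded outright. Testing the adjoint-difference equation $a(v_p, \psf_h - \hat{\wbpsf}_h) = (C(\ysf_h - \hat{\wbysf}_h), Cv_p)_\bbH$ with $v_p = \ysf_h - \hat{\wbysf}_h \in \bbY_2^h$, and the state-difference equation $a(\ysf_h - \hat{\wbysf}_h, v_y) = b(\usf_h - \usf, v_y)$ with $v_y = \psf_h - \hat{\wbpsf}_h \in \bbX_1^h$ --- both admissible precisely because of \eqref{eq:intersection} --- yields
\[
  b(\usf_h - \usf, \hat{\wbpsf}_h - \psf_h) = -\| C(\ysf_h - \hat{\wbysf}_h) \|_\bbH^2 \leq 0,
\]
so this term is simply dropped. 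Your plan instead estimates $b(\usf - \usf_h, \psf_h - \hat{\wbpsf}_h)$ by continuity of $b$, the BNB bound on $\| \psf_h - \hat{\wbpsf}_h \|_{\bbX_2}$, and Young's inequality, claiming the result "absorbs into the $\varepsilon$-weighted supremum term." That cannot deliver the lemma as stated: however you split, the fraction of $\| \usf - \usf_h \|_\bbU^2$ produced by that Young step must be absorbed into the left-hand side $\lambda \| \usf - \usf_h \|_\bbU^2$, which forces the coefficient multiplying the squared supremum to be at least of order $\|b\|^2/\lambda$ --- a fixed constant, not an arbitrary $\varepsilon$.

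The arbitrariness of $\varepsilon$ is not cosmetic, and it is achieved in the paper only because the supremum enters through a different term: $b(\Pi_\bbU \usf - \usf, \psf_h - \hat{\wbpsf}_h)$, paired with the \emph{projection error} rather than with $\usf - \usf_h$ (this term arises when rewriting the contribution of the discrete variational inequality, and your outline never isolates it). There Young's inequality with any $\varepsilon > 0$ is harmless, since the $\varepsilon^{-1}$ lands on $\| \Pi_\bbU \usf - \usf \|_\bbU^2$ inside $c_1$. This matters downstream: in the proof of Corollary \ref{col:errp} the supremum is bounded by $\fraki_\T \| \usf - \usf_h \|_{L^2(\Omega)}$, with $\sigma_\T \fraki_\T = |\log h_\T|$ when $n = 3$, and one must choose $\varepsilon \simeq |\log h_\T|^{-2}$ (arbitrarily small, $h$-dependent) so that $\varepsilon \sigma_\T^2 \fraki_\T^2 \| \usf - \usf_h \|^2$ can be absorbed into the left-hand side. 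With a fixed constant in place of $\varepsilon$, that absorption is impossible and the three-dimensional rate is lost. So while your opening moves (choice of test functions in the two variational inequalities, insertion of $\pm \hat{\wbpsf}_h$, the identity satisfied by $\psf_h - \hat{\wbpsf}_h$) coincide with the paper's, you are missing the one structural idea the intersection hypothesis exists to provide --- the sign-definiteness of the purely discrete cross term --- and it is exactly this idea that makes the statement "for any $\varepsilon > 0$" true.
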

\begin{proof}
Since by definition $\Uad^h \subset \Uad$ and by assumption $\Pi_{\bbU} \Uad \subset \Uad^h$, we set $u=\usf_h$ and $u_h = \Pi_\bbU \usf$ in \eqref{eq:VIabs} and \eqref{eq:VIabsh} respectively. Adding the ensuing inequalities we obtain
\begin{equation}\
\label{eq:aux}
 \lambda \| \usf - \usf_h \|_{\bbU}^2 \leq b(\usf_h - \usf, \psf - \psf_h) 
  +b( \Pi_\bbU \usf - \usf, \psf_h) +  \lambda (\usf_h, \Pi_\bbU \usf - \usf )_{\bbU}. 
\end{equation}

Denote $\mathrm{I} = b(\usf_h - \usf, \psf - \psf_h)$. In order to estimate this term we add and subtract $\hat{\wbpsf}_h$ to obtain
\begin{equation}
\label{I}
\textrm{I}  = b( \usf_h - \usf, \psf - \hat{\wbpsf}_h ) + b( \usf_h - \usf, \hat{\wbpsf}_h - \psf_h ).
\end{equation}
Since $\hat{\wbpsf}_h$ is the unique solution to \eqref{eq:hatph}, 
% with $\ysf_h$ replaced by $\hat{\wbysf}_h$ 
we have that
\begin{equation}
\label{eq:phMphu}
  a( v_p, \psf_h - \hat{\wbpsf}_h ) = ( C( \ysf_h - \hat{\wbysf}_h ), C v_p )_{\bbH} \quad \forall v_p \in \bbY_2^h.
\end{equation}
Similarly, since $\ysf_h$ solves \eqref{eq:hatyh}, 
% with $u_h$ replaced by $\usf_h$, 
we derive
\[
  a( \ysf_h - \hat{\wbysf}_h , v_y ) = b( \usf_h - \usf, v_y ) \quad \forall v_y \in \bbX_1^h.
\]
Set $v_p = \ysf_h - \hat{\wbysf}_h$ and $v_y = \psf_h - \hat{\wbpsf}_h$ which, by assumption \eqref{eq:intersection}, are admissible test functions to obtain
\[
  b( \usf_h - \usf, \hat{\wbpsf}_h - \psf_h  ) = ( C( \ysf_h - \hat{\wbysf}_h), C( \hat{\wbysf}_h - \ysf_h) )_{\bbH} \leq 0.
\]
This, and the continuity of the bilinear form $b$ allow us to bound \eqref{I} as follows:
\begin{equation*}
% \label{eq:Ileq}
  \textrm{I} \leq b( \usf_h - \usf, \psf - \hat{\wbpsf}_h ) \leq \frac\lambda4 \| \usf - \usf_h \|_{\bbU}^2 
  +  \frac{\| b \|^2}{\lambda} \| \psf - \hat{\wbpsf}_h \|_{\bbX_2}^2, 
\end{equation*}
where $\|b\|$ denotes the norm of the bilinear form $b$.

Let us now analyze the remaining terms in \eqref{eq:aux}, which we denote by $\textrm{II}$. To do this, we rewrite $\textrm{II}$ as follows:
\begin{multline*}
  \textrm{II} = b(\Pi_{\bbU} \usf - \usf, \psf) + \lambda ( \usf, \Pi_{\bbU} \usf - \usf)_{\bbU}
     + \lambda (\usf_h - \usf, \Pi_{\bbU} \usf - \usf )_{\bbU} \\
     + b(\Pi_{\bbU} \usf - \usf, \hat{\wbpsf}_h - \psf )
     + b(\Pi_{\bbU} \usf - \usf, \psf_h - \hat{\wbpsf}_h ).
\end{multline*}
Now, notice that
\[
  b(\Pi_{\bbU} \usf - \usf, \psf) + \lambda ( \usf, \Pi_{\bbU} \usf - \usf)_{\bbU} = j'(\usf)(\Pi_{\bbU} \usf - \usf)
\]
and
\[
  \lambda (\usf_h - \usf, \Pi_{\bbU} \usf - \usf )_{\bbU} \leq \frac\lambda4 \| \usf - \usf_h \|_{\bbU}^2 + \frac1\lambda \| \usf - \Pi_{\bbU} \usf \|_{\bbU}^2.
\]
Next, since the bilinear form $b$ is continuous, we arrive at
\[
  b(\Pi_{\bbU} \usf - \usf, \hat{\wbpsf}_h  - \psf ) \leq \frac{\| b\|}{2} \| \Pi_{\bbU} \usf - \usf \|_{\bbU}^2 
  + \frac{\| b\|}{2} \| \psf - \hat{\wbpsf}_h \|_{\bbX_2}^2.
\]
The remaining term, which we will denote by $\textrm{III}$, is treated by using, again, that the bilinear form $b$ is continuous. This implies that for any $\varepsilon > 0$
\[
  \textrm{III}:= b(\Pi_{\bbU} \usf - \usf, \psf_h - \hat{\wbpsf}_h) \leq \frac{\|b\|}{2\varepsilon} \| \Pi_{\bbU} \usf - \usf \|_{\bbU}^2 
  + \|b\|\frac{\varepsilon}2 \| \psf_h - \hat{\wbpsf}_h \|_{\bbX_2}^2 . 
\]
From \eqref{eq:phMphu} and the fact that the discrete spaces satisfy a discrete BNB condition uniformly in $h$ we conclude
\[
  \| \psf_h - \hat{\wbpsf}_h \|_{\bbX_2} \lesssim \sup_{v_p \in \bbY_2^h} \frac{ (C (\ysf_h - \hat{\wbysf}_h), C v_p )_{\bbH}}{\| v_p \|_{\bbY_2^h}}.
\]
Collecting these derived estimates we bound the term $\textrm{II}$.

The estimates we obtained for $\textrm{I}$, $\textrm{II}$ and $\textrm{III}$ readily yield the claimed result.
\end{proof}

The use of this simple result will be illustrated in the following sections.

\begin{remark}[on the choice of $\varepsilon$]
\rm
In \eqref{eq:errest} the choice of $\varepsilon$ can be arbitrary. A judicious choice will be fundamental in the error analysis of the optimization problem with point observations \eqref{eq:defofJp}--\eqref{eq:cc2}.
\end{remark}

\section{Optimization with nonuniformly elliptic equations}
\label{sec:A}
In this section we study the problem \eqref{eq:defofJA}--\eqref{eq:cc} under the abstract framework developed in Section \ref{sub:optim}. Let $\Omega \subset \R^n$ be a convex polytope $(n \geq 1)$ and $\omega \in A_2(\mathbb{R}^n)$ where the $A_2$-Muchenkhoupt class is given by Definition~\ref{def:defofAp}. In addition, we assume that $\A: \Omega \to \mathbb{M}^n$ is symmetric and satisfies the nonuniform ellipticity condition \eqref{eq:nonunifellip}.

\subsection{Analysis}
\label{sub:analysisA}
Owing to the fact that the diffusion matrix $\A$ satisfies \eqref{eq:nonunifellip} with $\omega \in A_2(\mathbb{R}^n)$, as shown in \cite{FKS:82}, the state equation \eqref{eq:defofPDEA} is well posed in $H^1_0(\omega,\Omega)$, whenever $u \in L^2(\omega^{-1},\Omega)$. For this reason, we set:
\begin{enumerate}[$\bullet$]
  \item $\bbH = L^2(\omega,\Omega)$ and $C = \id$.

  \item $\bbU = L^2(\omega^{-1},\Omega)$.
  
  \item $\bbX_1 = \bbX_2 = \bbY_1 = \bbY_2 = H^1_0(\omega,\Omega)$, and
  \[
    a(v_1,v_2) = \int_\Omega \GRAD v_2(x)^\intercal \A(x) \GRAD v_1(x) \diff x,
  \]
  which, as a consequence of \eqref{eq:nonunifellip} with $\omega \in A_2(\mathbb{R}^n)$ and the Poincar\'e inequality \eqref{eq:Poincare}, is bounded, symmetric and coercive in $H^1_0(\omega,\Omega)$.
  
  \item $b(\cdot,\cdot) = (\cdot,\cdot)_{L^2(\Omega)}$. Notice that, if $v_1 \in L^2(\omega^{-1},\Omega)$ and $v_2 \in H^1_0(\omega,\Omega)$ then
  \[
    b(v_1,v_2) = (v_1, v_2)_{L^2(\Omega)} \leq \| v_1 \|_{L^2(\omega^{-1},\Omega)} \| v_2 \|_{L^2(\omega,\Omega)} \lesssim
    \| v_1 \|_{L^2(\omega^{-1},\Omega)} \| \nabla v_2 \|_{L^2(\omega,\Omega)},
  \]
  where we have used the Poincar\'e inequality \eqref{eq:Poincare}.

    \item The cost functional as in \eqref{eq:defofJA}.
\end{enumerate}

For $\asf,\bsf \in \R$, $\asf < \bsf$ we define the set of admissible controls by
\begin{equation}
\label{eq:UA}
 \mathbb{U}_{\A} = \left\{ u \in L^2(\omega^{-1},\Omega): \asf \leq u \leq \bsf \ \mae x \in \Omega \right\}, 
\end{equation}
which is closed, bounded and convex in $L^2(\omega^{-1},\Omega)$. In addition, since $\lambda > 0$ the functional
\eqref{eq:defofJA} is strictly convex. Consequently, the optimization problem with nonuniformly elliptic state equation \eqref{eq:defofJA}--\eqref{eq:cc} has a unique optimal pair $(\ysf,\usf) \in H^1_0(\omega,\Omega) \times L^2(\omega^{-1},\Omega)$ \cite[Theorem 2.14]{Tbook}. In this setting, 
the first order necessary and sufficient optimality condition \eqref{eq:VIabs} reads
\begin{equation}
\label{eq:VIA}
  (\psf, u - \usf)_{L^2(\Omega)} + \lambda (\usf, u - \usf)_{L^2(\omega^{-1},\Omega)} \geq 0 \quad \forall u \in \bbU_\A,
\end{equation}
where the optimal state $\ysf = \ysf(\usf) \in H_0^1(\omega,\Omega)$ solves
\begin{equation}
\label{eq:stateA}
  a(\ysf,v) = (\usf,v)_{L^2(\Omega)} \quad \forall v \in H^1_0(\omega, \Omega)
\end{equation}
and the optimal adjoint state $\psf = \psf(\usf) \in H^1_0(\omega,\Omega)$ solves
\begin{equation}
\label{eq:adjA}
  a(v, \psf ) = (\ysf - y_d,v)_{L^2(\omega,\Omega)} \quad \forall v \in H^1_0(\omega,\Omega).
\end{equation}
The results of \cite{FKS:82}, again, yield that the adjoint problem is well posed.

\subsection{Discretization}
\label{sub:discrA}

Let us now propose a discretization for problem \eqref{eq:defofJA}--\eqref{eq:cc}, and derive a priori error estimates based on the results of Section \ref{sub:absdisc}. Given a family $\Tr = \{\T\}$ of quasi-uniform triangulations of $\Omega$ we set:
\begin{enumerate}[$\bullet$]
  \item $\bbU^h = \U(\T)$, where the discrete space $\U(\T)$ is defined in \eqref{eq:Upc}.
  
  \item $\Uad^h = \U^h \cap \U_\A$, where the set of admissible controls $\U_\A$ is defined in \eqref{eq:UA}.
  
  \item $\Pi_\bbU$ is the $L^2(\omega^{-1},\Omega)$-orthogonal projection onto $\U(\T)$, which we denote by $ \Pi_{\omega^{-1}}$ and is defined by
  \begin{equation}
  \label{eq:proj-1}
 (\Pi_{\omega^{-1}} v)_{|T} = \frac1{\omega^{-1}(T)} \int _T \omega^{-1}(x) v(x) \diff x \quad \forall T \in \T .    
  \end{equation}

  The definition of $\bbU_{\A}$ yields that $\Pi_{\omega^{-1}} \U_\A \subset \Uad^h$.
  
  \item $\bbX_1^h = \bbX_2^h = \bbY_1^h = \bbY_2^h = \V(\T)$, where the discrete space $\V(\T)$ is defined in \eqref{eq:V}.
\end{enumerate}

Notice that, since $\bbX_1^h = \bbX_2^h = \bbY_1^h = \bbY_2^h$, assumption \eqref{eq:intersection} is trivially satisfied. We obtain the following a priori error estimate.

\begin{corollary}[a priori error estimate]
\label{col:errA}
Let $\usf$ and $\usf_h$ be the continuous and discrete optimal controls, respectively. If $\ysf, \psf \in H^2(\omega,\Omega)$ then
\begin{align*}
\| \usf - \usf_h \|_{L^2(\omega^{-1},\Omega)} & \lesssim \left\| \usf - \Pi_{\omega^{-1}} \usf \right\|_{L^2(\omega^{-1},\Omega)} + \| \omega \psf - \Pi_{\omega^{-1}}(\omega\psf) \|_{L^2(\omega^{-1},\Omega)}\\
& + h_\T (  \| \ysf \|_{H^2(\omega,\Omega)} + \| \psf \|_{H^2(\omega,\Omega)}),
\end{align*}
where the hidden constant is independent of $h_\T$.
\end{corollary}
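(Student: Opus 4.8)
The plan is to specialize the abstract error estimate of Lemma~\ref{lem:abserror} to the present setting and then estimate each term on the right-hand side of \eqref{eq:errest} using the finite element approximation theory in weighted spaces recalled in Section~\ref{sub:FEM}. Since here $\bbX_1^h = \bbX_2^h = \bbY_1^h = \bbY_2^h = \V(\T)$, the intersection condition \eqref{eq:intersection} is automatic, and moreover $\hat{\wbysf}_h = \hat{\wbpsf}_h$-type simplifications occur because the state and adjoint spaces coincide. The starting point is therefore the inequality \eqref{eq:errest}, which bounds $\| \usf - \usf_h \|_{\bbU}^2$ with $\bbU = L^2(\omega^{-1},\Omega)$ by three pieces: the adjoint approximation term $\| \psf - \hat{\wbpsf}_h \|_{\bbX_2}^2$, the projection-related terms $j'(\usf)(\Pi_\bbU \usf - \usf) + \| \Pi_\bbU \usf - \usf \|_{\bbU}^2$, and the $\varepsilon$-term involving $\ysf_h - \hat{\wbysf}_h$.

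First I would handle the $\varepsilon$-term. Since $C = \id$ and $\bbH = L^2(\omega,\Omega)$, the supremum over $v_p \in \bbY_2^h$ of $(C(\ysf_h - \hat{\wbysf}_h), C v_p)_\bbH / \| v_p \|_{\bbY_2}$ is controlled by $\| \ysf_h - \hat{\wbysf}_h \|_{L^2(\omega,\Omega)}$, and by the Poincar\'e inequality \eqref{eq:Poincare} together with Galerkin orthogonality for the state equation this reduces to a standard finite element error between $\ysf$ and its discrete approximation. Because $\ysf \in H^2(\omega,\Omega)$ by hypothesis, the fourth approximation estimate in Section~\ref{sub:FEM} gives a bound of order $h_\T \| \ysf \|_{H^2(\omega,\Omega)}$; as $\varepsilon$ is at our disposal and the constant $c_1$ merely absorbs $\varepsilon^{-1}$, this contributes the $h_\T \| \ysf \|_{H^2(\omega,\Omega)}$ summand. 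For the projection term $\| \Pi_\bbU \usf - \usf \|_{\bbU}$ with $\Pi_\bbU = \Pi_{\omega^{-1}}$, nothing needs to be done: it appears verbatim in the claimed estimate as $\| \usf - \Pi_{\omega^{-1}} \usf \|_{L^2(\omega^{-1},\Omega)}$.

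Next I would treat the two genuinely new contributions. The adjoint term $\| \psf - \hat{\wbpsf}_h \|_{\bbX_2}$ is, up to a consistency error, the finite element approximation error for the adjoint equation \eqref{eq:adjA}; using the $H^2(\omega,\Omega)$ regularity of $\psf$ and again the fourth estimate of Section~\ref{sub:FEM} yields a bound of order $h_\T \| \psf \|_{H^2(\omega,\Omega)}$. The term $j'(\usf)(\Pi_\bbU \usf - \usf)$ is where the weighted projection $\Pi_{\omega^{-1}}$ enters essentially. Writing out the G\^ateaux derivative, $j'(\usf)(\Pi_{\omega^{-1}}\usf - \usf) = (\psf + \lambda \omega^{-1}\usf\,,\,\Pi_{\omega^{-1}}\usf - \usf)$ paired appropriately; the key is that by the defining orthogonality \eqref{eq:proj-1} of $\Pi_{\omega^{-1}}$ in the $L^2(\omega^{-1},\Omega)$ inner product, the $\lambda$-part and the $\omega^{-1}$-weighted part of the $\psf$-pairing vanish against $\Pi_{\omega^{-1}}\usf - \usf$. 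What survives is a term of the form $(\psf, \Pi_{\omega^{-1}}\usf - \usf)_{L^2(\Omega)}$, which I would rewrite, by inserting the weight, as an $L^2(\omega^{-1},\Omega)$ pairing of $\omega\psf$ against $\Pi_{\omega^{-1}}\usf - \usf$ and then exploit the projection orthogonality once more to replace $\omega\psf$ by $\omega\psf - \Pi_{\omega^{-1}}(\omega\psf)$. A Cauchy--Schwarz step in $L^2(\omega^{-1},\Omega)$ then produces exactly the factor $\| \omega\psf - \Pi_{\omega^{-1}}(\omega\psf) \|_{L^2(\omega^{-1},\Omega)}$ appearing in the statement (the companion factor $\| \usf - \Pi_{\omega^{-1}}\usf \|_{L^2(\omega^{-1},\Omega)}$ being absorbed by Young's inequality).

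The main obstacle I anticipate is precisely this manipulation of $j'(\usf)(\Pi_{\omega^{-1}}\usf - \usf)$: one must carefully pass between the unweighted $L^2(\Omega)$ pairing coming from the bilinear form $b$ and the weighted $L^2(\omega^{-1},\Omega)$ inner product defining the projection, and then use the $L^2(\omega^{-1},\Omega)$-orthogonality of $\Pi_{\omega^{-1}}$ twice --- once to symmetrize onto $\omega\psf - \Pi_{\omega^{-1}}(\omega\psf)$, and implicitly to discard the regularization contribution. The appearance of the \emph{transformed} quantity $\omega\psf$ rather than $\psf$ in the final estimate is the signature of this weighted duality, and getting the weights to balance correctly (so that $\omega \cdot \omega^{-1}$ cancellations leave a bona fide $L^2(\omega^{-1},\Omega)$ norm) is the delicate bookkeeping step. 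Once this identity is established, collecting the bounds for the $\varepsilon$-term, the adjoint term, and the two projection terms and finally applying Young's inequality to absorb all $\| \usf - \usf_h \|_{L^2(\omega^{-1},\Omega)}^2$ contributions on the left yields the stated estimate.
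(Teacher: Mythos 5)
Your handling of the $\varepsilon$-term contains a genuine error that would make the argument circular. The function $\hat{\wbysf}_h$ is the \emph{discrete} solution with datum $\usf$, while $\ysf_h$ is the discrete solution with datum $\usf_h$; hence $\ysf_h - \hat{\wbysf}_h = S_h(\usf_h - \usf)$ satisfies
\[
  a(\ysf_h - \hat{\wbysf}_h, v_h) = (\usf_h - \usf, v_h)_{L^2(\Omega)} \quad \forall v_h \in \V(\T).
\]
This is a difference of two discrete solutions with \emph{different controls}: it is not a Galerkin error for $\ysf$, no Galerkin orthogonality relates it to a continuous solution, and it cannot be bounded by $h_\T \| \ysf \|_{H^2(\omega,\Omega)}$. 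Its size is proportional to $\| \usf - \usf_h \|_{L^2(\omega^{-1},\Omega)}$, i.e., to the very quantity being estimated, so your claimed bound presupposes the conclusion. The correct treatment (and the paper's) is: by the Poincar\'e inequality \eqref{eq:Poincare} and the stability of the discrete state equation,
\[
  ( \ysf_h - \hat{\wbysf}_h , v_h)_{L^2(\omega,\Omega)} \lesssim \| \usf - \usf_h \|_{L^2(\omega^{-1},\Omega)} \| v_h \|_{H^1_0(\omega,\Omega)},
\]
so the supremum in \eqref{eq:errest} is bounded by $\| \usf - \usf_h \|_{L^2(\omega^{-1},\Omega)}$, and the $\varepsilon$-term is then \emph{absorbed into the left-hand side} by taking $\varepsilon$ sufficiently small; it never contributes an $h_\T$-term at all.

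A second, related inaccuracy: the adjoint term $\| \psf - \hat{\wbpsf}_h \|_{H^1_0(\omega,\Omega)}$ is not a pure finite element error either, since $\hat{\wbpsf}_h$ is driven by the discrete datum $\hat{\wbysf}_h - y_d$ rather than $\ysf - y_d$. The error equation
\[
  a(v_h, \psf - \hat{\wbpsf}_h) = (\ysf - \hat{\wbysf}_h, v_h)_{L^2(\omega,\Omega)} \quad \forall v_h \in \V(\T)
\]
together with coercivity and interpolation yields $\| \psf - \hat{\wbpsf}_h \|_{H^1_0(\omega,\Omega)} \lesssim \| \psf - \Pi_\T \psf \|_{H^1_0(\omega,\Omega)} + \| \ysf - \hat{\wbysf}_h \|_{H^1_0(\omega,\Omega)} \lesssim h_\T ( \| \psf \|_{H^2(\omega,\Omega)} + \| \ysf \|_{H^2(\omega,\Omega)} )$. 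This is where the $h_\T \| \ysf \|_{H^2(\omega,\Omega)}$ contribution in the statement actually originates, not the $\varepsilon$-term as in your accounting. Your treatment of $j'(\usf)(\Pi_{\omega^{-1}}\usf - \usf)$ --- passing between the unweighted pairing and the $L^2(\omega^{-1},\Omega)$ inner product, using the projection orthogonality to insert $\omega\psf - \Pi_{\omega^{-1}}(\omega\psf)$, and discarding the nonpositive regularization term --- is correct and coincides with the paper's argument.
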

\begin{proof}
We invoke Lemma \ref{lem:abserror} and bound each one of the terms in \eqref{eq:errest}. First, since $\ysf, \psf \in H^2(\omega,\Omega)$, the results of \cite{NOS2} imply that
\[
  \| \psf - \hat{\wbpsf}_h \|_{H^1(\omega,\Omega)} \lesssim h_\T \left(  \| \ysf \|_{H^2(\omega,\Omega)} + \| \psf \|_{H^2(\omega,\Omega)} \right).
\]
Indeed, since $\psf$ solves \eqref{eq:adjA} and $\hat{\wbpsf}_h$ solves \eqref{eq:hatph},
% \eqref{eq:absadjh} with $\ysf_h$ replaced by $\hat{\wbysf}_h$,
% and $\psf_h$ solves \eqref{eq:absadjh}, 
the term $\psf - \hat{\wbpsf}_h$ satisfies
\[
  a(v_h,\psf - \hat{\wbpsf}_h ) = ( \ysf - \hat{\wbysf}_h, v_h )_{L^2(\omega,\Omega)} \quad \forall v_h \in \V(\T).
\]
Adding and subtracting the terms $\Pi_\T \psf$ and $\psf$ appropriately, where $\Pi_\T$ denotes the interpolation operator described in \S\ref{sub:FEM}, and using the coercivity of $a$ we arrive at
\[
  \| \psf - \hat{\wbpsf}_h \|_{H^1_0(\omega,\Omega)} \lesssim \| \psf - \Pi_\T \psf \|_{H^1_0(\omega,\Omega)} + \| \ysf - \hat{\wbysf}_h \|_{ H^1_0(\omega,\Omega)}.
\]
Using the regularity of $\psf$ and $\ysf$ we obtain the claimed bound.

We now handle the second term involving the derivative of the reduced cost $j$. Since it can be equivalently written using \eqref{eq:VIabs}, invoking the definition of $\Pi_{\omega^{-1}}$ given by \eqref{eq:proj-1}, we obtain
\begin{align*}
  j'(\usf)(\Pi_{\omega^{-1}} \usf -\usf ) &=
  (\psf,\Pi_{\omega^{-1}}\usf - \usf)_{L^2(\Omega)} + \lambda (\usf,\Pi_{\omega^{-1}} \usf -\usf)_{L^2(\omega^{-1},\Omega)} 
  \\ &=
  (\omega\psf - \Pi_{\omega^{-1}}(\omega\psf), \Pi_{\omega^{-1}}\usf - \usf)_{L^2(\omega^{-1},\Omega)}
%    \EO{(\psf - \Pi_{\omega^{}}\psf, \Pi_{\omega^{-1}}\usf - \usf)_{L^2(\Omega)}}
  - \lambda \| \Pi_{\omega^{-1}} \usf -\usf \|_{L^2(\omega^{-1},\Omega)}^2
  \\ &\lesssim
  \| \omega \psf - \Pi_{\omega^{-1}}(\omega\psf) \|_{L^2(\omega^{-1},\Omega)}^2 + \| \Pi_{\omega^{-1}} \usf -\usf \|_{L^2(\omega^{-1},\Omega)}^2 . 
\end{align*}

The Poincar\'e inequality \eqref{eq:Poincare}, in conjunction with the stability of the discrete state equation \eqref{eq:abstateh}, yield
\begin{align*}
  ( \ysf_h - \hat{\wbysf}_h , v_h)_{L^2(\omega,\Omega)} &\lesssim \| \ysf_h - \hat{\wbysf}_h \|_{H^1_0(\omega,\Omega)} \| v_h \|_{H^1_0(\omega,\Omega)} \\
  &\lesssim \|\usf - \usf_h \|_{L^2(\omega^{-1},\Omega)} \| v_h \|_{H^1_0(\omega,\Omega)},
\end{align*}
for all $v_h \in \V(\T)$. This yields control of the last term in \eqref{eq:errest}.

%where we used the definition of $\Pi_{\omega^{-1}}$, the fact that $\omega\psf \in H^1(\omega^{-1},\Omega)$ and a Poincar\'e-type inequality \cite{NOS2}.
These bounds yield the result.
\end{proof}

\begin{remark}[regularity of $\ysf$ and $\psf$] \rm
\label{rem:regpA}
The results of Corollary~\ref{col:errA} rely on the fact that $\ysf, \psf \in H^2(\omega,\Omega)$. Reference \cite{MR2780884} provides sufficient conditions for this to hold.
\end{remark}

\begin{theorem}[rate of convergence]
In the setting of Corollary~\ref{col:errA}, if we additionally assume that $\omega \psf \in H^1(\omega^{-1},\Omega)$ then, we have the optimal error estimate
\[
 \| \usf - \usf_h \|_{L^2(\omega^{-1},\Omega)}
  \lesssim h_\T \left( \| \ysf \|_{H^2(\omega,\Omega)} + \| \psf \|_{H^2(\omega,\Omega)} +\| \omega \psf \|_{H^1(\omega^{-1},\Omega)} + \| \usf  \|_{H^1(\omega^{-1},\Omega)} \right),
\]
where the hidden constant is independent of $h_\T$. 
\end{theorem}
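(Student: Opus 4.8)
The plan is to start from the estimate of Corollary~\ref{col:errA} and reduce the whole theorem to bounding its two projection-error terms, $\| \usf - \Pi_{\omega^{-1}}\usf \|_{L^2(\omega^{-1},\Omega)}$ and $\| \omega\psf - \Pi_{\omega^{-1}}(\omega\psf) \|_{L^2(\omega^{-1},\Omega)}$, by $h_\T$ times the advertised $H^1(\omega^{-1},\Omega)$ norms; the remaining term in that corollary is already $\calO(h_\T)$. The first thing I would record is that, since $\omega \in A_2$, its reciprocal $\omega^{-1}$ is again an $A_2$ weight with $C_{2,\omega^{-1}} = C_{2,\omega}$, so the weighted approximation framework of Section~\ref{sub:FEM} and the Poincar\'e inequality \eqref{eq:Poincare} are available verbatim with $\omega$ replaced by $\omega^{-1}$.

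The heart of the argument is a first-order approximation estimate for the piecewise-constant projection $\Pi_{\omega^{-1}}$ defined in \eqref{eq:proj-1}. By construction $(\Pi_{\omega^{-1}} v)_{|T}$ is the $L^2(\omega^{-1},T)$-orthogonal projection of $v$ onto the constants, so it is the \emph{best} $L^2(\omega^{-1},T)$ approximation of $v$ by constants and, in particular, reproduces constants. On each element $T$, of diameter $h_T$, I would therefore bound $\| v - \Pi_{\omega^{-1}} v \|_{L^2(\omega^{-1},T)}$ by the distance of $v$ to its weighted mean and invoke the local, mean-value weighted Poincar\'e inequality
\[
  \| v - \Pi_{\omega^{-1}} v \|_{L^2(\omega^{-1},T)} \lesssim h_T \| \GRAD v \|_{L^2(\omega^{-1},T)},
\]
whose hidden constant depends on $\omega^{-1}$ only through $C_{2,\omega^{-1}}$ and on the shape regularity of $T$. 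Squaring, summing over $T \in \T$ and using quasi-uniformity ($h_T \lesssim h_\T$) then produces the global estimate
\[
  \| v - \Pi_{\omega^{-1}} v \|_{L^2(\omega^{-1},\Omega)} \lesssim h_\T \| \GRAD v \|_{L^2(\omega^{-1},\Omega)} \le h_\T \| v \|_{H^1(\omega^{-1},\Omega)}.
\]

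It remains to apply this with $v = \omega\psf$ and $v = \usf$. For the former, the hypothesis $\omega\psf \in H^1(\omega^{-1},\Omega)$ is exactly what is needed. For the latter I would first confirm that $\usf \in H^1(\omega^{-1},\Omega)$: rewriting the $L^2(\Omega)$ pairing in \eqref{eq:VIA} as $(\psf, u-\usf)_{L^2(\Omega)} = (\omega\psf, u-\usf)_{L^2(\omega^{-1},\Omega)}$ recasts the variational inequality as $(\omega\psf + \lambda\usf,\, u-\usf)_{L^2(\omega^{-1},\Omega)} \ge 0$ for all $u \in \bbU_\A$, whence the pointwise projection formula $\usf = \min\{\bsf, \max\{\asf, -\lambda^{-1}\omega\psf\}\}$. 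Since the truncation $t \mapsto \min\{\bsf,\max\{\asf,t\}\}$ is globally Lipschitz, it maps $H^1(\omega^{-1},\Omega)$ into itself with $\| \GRAD \usf \|_{L^2(\omega^{-1},\Omega)} \le \lambda^{-1}\| \GRAD(\omega\psf)\|_{L^2(\omega^{-1},\Omega)}$, so $\usf \in H^1(\omega^{-1},\Omega)$. Inserting the two projection bounds, together with the $\calO(h_\T)$ bound for $\| \psf - \hat{\wbpsf}_h \|_{\bbX_2}$ from Corollary~\ref{col:errA}, into \eqref{eq:errest} yields the stated rate.

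The main obstacle I anticipate is the local weighted Poincar\'e inequality with a constant uniform over the family $\Tr$: unlike the zero-trace estimate \eqref{eq:Poincare}, this is a mean-value version whose $h_T$-scaling and $C_{2,\omega^{-1}}$-dependence must be tracked carefully, and it is precisely here that the $A_2$ structure and the strong-doubling property of $\omega^{-1}\,\diff x$ enter. A secondary technical point is the regularity transfer $\omega\psf \in H^1(\omega^{-1},\Omega) \Rightarrow \usf \in H^1(\omega^{-1},\Omega)$ via the projection formula, which relies on the Stampacchia-type truncation result being valid in the weighted space $H^1(\omega^{-1},\Omega)$.
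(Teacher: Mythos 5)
Your proposal is correct and follows essentially the same route as the paper: reduce to the two projection-error terms from Corollary~\ref{col:errA}, bound each by $h_\T$ times the corresponding $H^1(\omega^{-1},\Omega)$ norm via a weighted Poincar\'e-type estimate (the paper simply cites \cite[Theorem 6.2]{NOS2} where you sketch the local mean-value argument), and obtain $\usf \in H^1(\omega^{-1},\Omega)$ from the pointwise projection formula $\usf = \max\{\asf,\min\{\bsf,-\lambda^{-1}\omega\psf\}\}$ together with the Stampacchia truncation result \cite[Theorem A.1]{KSbook}. Your explicit observation that $(\psf,u-\usf)_{L^2(\Omega)} = (\omega\psf,u-\usf)_{L^2(\omega^{-1},\Omega)}$, which explains why the weight appears in the projection formula, is a detail the paper leaves implicit but is exactly the right justification.
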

\begin{proof}
We bound $\left\| \usf - \Pi_{\omega^{-1}} \usf \right\|_{L^2(\omega^{-1},\Omega)}$ and $\| \omega \psf - \Pi_{\omega^{-1}}(\omega\psf) \|_{L^2(\omega^{-1},\Omega)}$. 
Using that $\omega\psf \in H^1(\omega^{-1},\Omega)$ and a Poincar\'e-type inequality \cite[Theorem 6.2]{NOS2}, we obtain
\[
 \| \omega \psf - \Pi_{\omega^{-1}}(\omega\psf) \|_{L^2(\omega^{-1},\Omega)} \lesssim h_{\T} \| \omega \psf \|_{H^1(\omega^{-1},\Omega)}.
\]
Now, to estimate the term $\Pi_{\omega^{-1}} \usf -\usf$, it is essential to understand the regularity properties of $\usf$. From \cite[Section 3.6.3]{Tbook}, $\usf$ solves \eqref{eq:VIA} if and only if
\[
  \usf = \max \left\{ \asf, \min\left\{ \bsf, -\frac1\lambda \omega\psf\right\} \right\}.
\]
The assumption $\omega\psf \in H^1(\omega^{-1},\Omega)$ immediately yields $\usf \in H^1(\omega^{-1},\Omega)$ \cite[Theorem A.1]{KSbook}, which allows us to derive the estimate 
\[
 \| \usf - \Pi_{\omega^{-1}} \usf \|_{L^2(\omega^{-1},\Omega)} \lesssim h_{\T}\| \usf  \|_{H^1(\omega^{-1},\Omega)}.
\]
Collecting the derived results we arrive at the desired estimate.
\end{proof}

\section{Optimization with point observations}
\label{sec:points}
Here we consider problem \eqref{eq:defofJp}--\eqref{eq:cc2}. Let $\Omega \subset \R^n$ be a convex polytope, with $n \in \{ 2,3\}$. We recall that $\mathcal{Z} \subset \Omega$ denotes the set of \emph{observable points} with $\# \mathcal{Z} < \infty$. 
% \EO{We asume that $\textrm{dist}(z,\partial \Omega) > d_{\partial \Omega}>0$ for every observable point $z \in \calZ$.}

\subsection{Analysis}
\label{sub:analysisp}
To analyze problem \eqref{eq:defofJp}--\eqref{eq:cc2} using the framework of weighted spaces we must begin by defining a suitable weight. Since $\#\calZ < \infty$, we know that $d_{\mathcal{Z}} = \min\{|z-z'|: z,z' \in \calZ, \ z\neq z' \} > 0$. For each $z \in \calZ$ we then define
\[
  \dist_z(x) = \frac1{2d_{\mathcal{Z}}} | x - z |, \qquad \varpi_z(x) = \frac{\dist_z(x)^{n-2}}{\log^2\dist_z(x)}
\]
and the weight
\begin{equation}
\label{eq:defofoweight}
  \varpi(x) = \begin{dcases}
                \varpi_z(x), & \exists z \in \calZ: \dist_z(x) < \frac12, \\
                \frac{2^{2-n}}{\log^2 2}, & \text{otherwise}.
              \end{dcases}
\end{equation}
As \cite[Lemma 7.5]{NOS2} shows, with this definition we have that $\varpi \in A_2$. With this $A_2$-weight at hand we set:
\begin{enumerate}[$\bullet$]
  \item $\bbH = \R^{\# \mathcal{Z}}$ and $C = \sum_{z \in \calZ} \mathbf{e}_z \delta_z $, where $\{\mathbf{e}_z\}_{z \in \calZ}$ is the canonical basis of $\bbH$.
  
  \item $\bbU = L^2(\Omega)$.
  
  \item $\bbX_1 = \bbY_1 = H^1_0(\Omega)$.
  
  \item $\bbX_2 = H^1_0(\varpi,\Omega)$ and $\bbY_2 = H^1_0(\varpi^{-1},\Omega)$ and
  \[
    a(v,w) = \int_\Omega \GRAD v(x)^\intercal \cdot \GRAD w(x) \diff x,
  \]
  which is bounded, symmetric and coercive in $H_0^1(\Omega)$ and satisfies the conditions of the BNB theorem in $H_0^1(\varpi,\Omega) \times H^1_0(\varpi^{-1},\Omega)$ \cite[Lemma 7.7]{NOS2}.
  
  \item $b(\cdot,\cdot) = (\cdot,\cdot)_{L^2(\Omega)}$. The results of \cite[Lemma 7.6]{NOS2} guarantee that, for $n<4$, the embedding $H^1_0(\varpi,\Omega) \hookrightarrow L^2(\Omega)$ holds. Therefore,
  \[
    b(v_1,v_2) \lesssim \| v_1 \|_{L^2(\Omega)} \| v_2 \|_{H^1_0(\varpi,\Omega)}.
  \]
\end{enumerate}

For $\asf, \bsf \in \R$ with $\asf < \bsf$ we define the set of admissible controls by
\begin{equation}
\label{eq:Uz}
\bbU_\calZ = \left\{ u \in L^2(\Omega): \asf \leq u \le \bsf,\ \mae x \in \Omega \right\}.
\end{equation}

With this notation, the pair $(\ysf,\usf) \in H_0^1(\Omega) \times L^2(\Omega)$ is optimal for problem
\eqref{eq:defofJp}--\eqref{eq:cc2} if and only if $\ysf$ solves
\begin{equation}
\label{eq:statep}
 \ysf \in H^1_0(\Omega): \quad   a(\ysf,w) = (\usf, w )_{L^2(\Omega)} \quad \forall w \in H^1_0(\Omega),
\end{equation}
and the optimal control $\usf$ satisfies
\begin{equation}
\label{eq:VIp}
  ( \psf, u - \usf )_{L^2(\Omega)} + \lambda (\usf, u - \usf)_{L^2(\Omega)} \geq 0 \quad \forall u \in \bbU_\calZ,
\end{equation}
where the adjoint variable $\psf\in H^1_0(\varpi,\Omega)$ satisfies, for every $w \in H^1_0(\varpi^{-1},\Omega)$, 
\begin{equation}
\label{eq:adjp}
  a(w,\psf) = \sum_{z \in \calZ} ( \ysf(z) - y_z) \langle \delta_z , w\rangle_{H^1_0(\varpi^{-1},\Omega)' \times H^1_0(\varpi^{-1},\Omega)}.
\end{equation}
Notice that, since $\Omega$ is a convex polytope and $n<4$, we have that $\ysf \in H^2(\Omega)\hookrightarrow C(\bar\Omega)$, so point evaluations are meaningful. Therefore, since $\delta_z \in H^1_0(\varpi,\Omega)'$ for $z \in \Omega$, using \cite[Lemma 7.7]{NOS2} we have that the adjoint problem is well posed.

\subsection{Discretization}
\label{sub:discrp}
For a family $\Tr = \{\T\}$ of quasi-uniform meshes of $\Omega$ we set:
\begin{enumerate}[$\bullet$]
  \item $\bbU^h = \U(\T)$, where $\U(\T)$ is defined in \eqref{eq:Upc} and $\Uad^h = \U(\T) \cap \bbU_\calZ$, where $ \bbU_\calZ$ is defined in \eqref{eq:Uz}. The operator $\Pi_{\bbU} = \Pi_{L^2}$ is the standard $L^2(\Omega)$-projection:
  \[
    (\Pi_{L^2} v )_{|T} = \fint_T v(x) \diff x \quad \forall T \in \T.
  \]

  \item $\bbX_1^h = \bbX_2^h = \bbY_1^h = \bbY_2^h = \V(\T)$.
\end{enumerate}

To shorten the exposition we define
\begin{equation}
\label{eq:defofsigma}
  \sigma_\T = h_\T^{2-n/2}|\log h_\T| .
\end{equation}

With this notation, the error estimate for the approximation \eqref{eq:Jdis}--\eqref{eq:ccdis} to problem \eqref{eq:defofJp}--\eqref{eq:cc2} reads as follows.

\begin{corollary}[a priori error estimates]
\label{col:errp}
Let $\usf$ and $\usf_h$ be the continuous and discrete optimal controls, respectively. Assume that $h_{\T}$ is sufficiently small. If $n=2$, then we have
\begin{equation}
\label{eq:tracking_estimate1}
\| \usf - \usf_h \|_{L^2(\Omega)}  \lesssim \| \usf - \Pi_{L^2} \usf \|_{L^2(\Omega)} + \sigma_\T \left(\| \GRAD\psf \|_{L^2(\varpi,\Omega)} +  \| \GRAD \ysf \|_{L^\infty(\Omega)} \right).
\end{equation}
If $n=3$, we have
\begin{align} \label{eq:tracking_estimate2} 
\| \usf - \usf_h \|_{L^2(\Omega)}  \lesssim &  |\log h_\T| \| \usf - \Pi_{L^2} \usf \|_{L^2(\Omega)}
 + \sigma_\T \left(\| \GRAD\psf \|_{L^2(\varpi,\Omega)} +  \| \GRAD \ysf \|_{L^\infty(\Omega)} \right),
\end{align}
where $\sigma_\T$ is defined in \eqref{eq:defofsigma} and the hidden constants are independent of $\T$.
\end{corollary}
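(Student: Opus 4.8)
The plan is to specialize the abstract error estimate of Lemma~\ref{lem:abserror} to the present setting, where $\bbU = L^2(\Omega)$, $\bbX_2 = H^1_0(\varpi,\Omega)$, $\bbY_2 = H^1_0(\varpi^{-1},\Omega)$, and $C$ is the point-evaluation operator. Since $\bbX_1^h = \bbX_2^h = \bbY_1^h = \bbY_2^h = \V(\T)$, the compatibility hypothesis \eqref{eq:intersection} holds trivially, so \eqref{eq:errest} is available and it remains to estimate, separately, the adjoint discretization error $\|\psf - \hat{\wbpsf}_h\|_{H^1_0(\varpi,\Omega)}$, the consistency term $j'(\usf)(\Pi_{L^2}\usf - \usf)$, the projection error $\|\Pi_{L^2}\usf - \usf\|_{L^2(\Omega)}$, and the supremum term carrying the factor $\varepsilon$. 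Throughout I would exploit that $\hat{\wbysf}_h$, defined by \eqref{eq:hatyh}, is precisely the standard piecewise linear finite element approximation of the state $\ysf$ solving \eqref{eq:statep}, so that $\ysf - \hat{\wbysf}_h$ enjoys Galerkin orthogonality against $\V(\T)$, and that $\Omega$ convex with $n<4$ gives $\ysf \in H^2(\Omega) \cap W^{1,\infty}(\Omega)$.

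For the adjoint error I would insert the auxiliary discrete adjoint $\tilde p_h \in \V(\T)$ defined by $a(v_h,\tilde p_h) = \sum_{z\in\calZ}(\ysf(z) - y_z)v_h(z)$, i.e. the discrete analogue of \eqref{eq:adjp} driven by the exact point values, and write $\psf - \hat{\wbpsf}_h = (\psf - \tilde p_h) + (\tilde p_h - \hat{\wbpsf}_h)$. The first summand is the genuine finite element error for the Poisson problem with Dirac data, measured in the weighted energy norm; here I would invoke the weighted approximation theory of \S\ref{sub:FEM} together with the Green-function estimates of \cite[Section 7.2]{NOS2} --- the logarithmic weight $\varpi$ in \eqref{eq:defofoweight} being chosen exactly so that $\delta_z \in H^1_0(\varpi,\Omega)'$ and the solution has the requisite $H^2(\varpi,\Omega)$-type regularity --- to obtain a bound of order $\sigma_\T \|\GRAD\psf\|_{L^2(\varpi,\Omega)}$. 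The second summand solves $a(v_h,\tilde p_h - \hat{\wbpsf}_h) = \sum_{z\in\calZ}(\ysf(z) - \hat{\wbysf}_h(z))v_h(z)$, so the uniform discrete BNB condition in $H^1_0(\varpi,\Omega)\times H^1_0(\varpi^{-1},\Omega)$ reduces its control to the pointwise state error $\max_{z\in\calZ}|\ysf(z) - \hat{\wbysf}_h(z)|$ times the norm of $\delta_z$ acting on discrete functions of $H^1_0(\varpi^{-1},\Omega)$.

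The pointwise state error is the heart of the matter. I would represent $(\ysf - \hat{\wbysf}_h)(z) = a(\ysf - \hat{\wbysf}_h, g_z)$ through the regularized weighted Green's function $g_z \in H^1_0(\varpi,\Omega)$ solving $a(w,g_z) = \langle \delta_z,w\rangle$ for $w\in H^1_0(\varpi^{-1},\Omega)$, use Galerkin orthogonality to replace $g_z$ by $g_z - \Pi_\T g_z$, and then apply a weighted Cauchy--Schwarz inequality with the conjugate weights $\varpi^{-1}$ and $\varpi$. The factor $\|\GRAD(g_z - \Pi_\T g_z)\|_{L^2(\varpi,\Omega)}$ is estimated by the weighted interpolation bounds and the Green-function regularity from \cite{NOS2}, while $\|\GRAD(\ysf - \hat{\wbysf}_h)\|_{L^2(\varpi^{-1},\Omega)}$ is controlled through the interpolation error and the bound $\|\GRAD\ysf\|_{L^\infty(\Omega)}$; together these produce the announced factor $\sigma_\T\|\GRAD\ysf\|_{L^\infty(\Omega)}$.

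Finally, the supremum term involves $\ysf_h - \hat{\wbysf}_h$, which solves the discrete state equation with right-hand side $\usf_h - \usf$. I would bound its point values by a discrete $L^\infty$-stability estimate (costing one logarithmic factor) in terms of $\|\usf_h - \usf\|_{L^2(\Omega)} \le \|\usf_h - \Pi_{L^2}\usf\|_{L^2(\Omega)} + \|\Pi_{L^2}\usf - \usf\|_{L^2(\Omega)}$, estimate $|v_p(z)|$ for $v_p\in\V(\T)$ by a discrete weighted Sobolev inequality, and then exploit the freedom in $\varepsilon$ (as anticipated in the Remark on the choice of $\varepsilon$) to absorb the $\|\usf - \usf_h\|_{L^2(\Omega)}$ contribution into the left-hand side. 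The consistency term $j'(\usf)(\Pi_{L^2}\usf - \usf)$ is treated exactly as in the proof of Corollary~\ref{col:errA}, using the variational inequality \eqref{eq:VIp} and the definition of $\Pi_{L^2}$. Collecting everything, dividing by $\lambda$, and carrying out the logarithmic bookkeeping yields \eqref{eq:tracking_estimate1} for $n=2$ and \eqref{eq:tracking_estimate2} for $n=3$; the extra $|\log h_\T|$ multiplying the control projection error in three dimensions is precisely the surplus logarithmic factor incurred by the point evaluation of discrete functions in the weighted space, and the main obstacle throughout is making the two pointwise/Green's-function estimates rigorous and tracking these logarithms sharply enough to reach the stated near-optimal rates.
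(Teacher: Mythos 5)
Your overall architecture reproduces the paper's proof almost exactly: the same auxiliary discrete adjoint driven by the exact point values (your $\tilde p_h$ is the paper's $\qsf_h$ in \eqref{eq:qh}), the same splitting $\psf - \hat{\wbpsf}_h = (\psf - \qsf_h) + (\qsf_h - \hat{\wbpsf}_h)$, the reduction of the second piece to $\| \ysf - \hat{\wbysf}_h \|_{L^\infty(\Omega)}$ via the discrete inf-sup condition of \cite[Lemma 7.8]{NOS2}, and the treatment of the $\varepsilon$-term through the inverse-inequality factor $\fraki_\T$ with $\varepsilon = 1$ for $n=2$ and $\varepsilon \sim |\log h_\T|^{-2}$ for $n=3$ (which is indeed where the extra $|\log h_\T|$ on the projection error comes from). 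The genuine gap is in the step you call ``the heart of the matter'': you propose to derive the pointwise state error estimate from scratch by pairing $\ysf - \hat{\wbysf}_h$ with the weighted Green's function $g_z$, Galerkin orthogonality, and a weighted Cauchy--Schwarz inequality, whereas the paper simply invokes the classical Schatz--Wahlbin estimate \cite[Theorem 5.1]{MR637283}, $\| \ysf - \hat{\wbysf}_h \|_{L^\infty(\Omega)} \lesssim h_\T |\log h_\T| \| \GRAD \ysf \|_{L^\infty(\Omega)}$, which is available because $\ysf \in W^{1,\infty}(\Omega)$ on a convex polytope.

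Your replacement step fails quantitatively, for two reasons. First, the factor $\| \GRAD ( g_z - \Pi_\T g_z ) \|_{L^2(\varpi,\Omega)}$ cannot be estimated by the weighted interpolation theory of \S\ref{sub:FEM}, because $g_z \notin H^2(\varpi,\Omega)$: near $z$ one has $|D^2 g_z| \sim |x-z|^{-n}$, so $\int_{|x-z|<c} |D^2 g_z|^2 \varpi \diff x \sim \int_0^c r^{-3} \log^{-2} r \diff r = \infty$ for $n \in \{2,3\}$. A dyadic computation away from the singularity shows that the interpolation error of $g_z$ in the $\varpi$-weighted energy norm is at best $O(|\log h_\T|^{-1})$ --- it does not even vanish at a polynomial rate. (This is precisely why \cite[Corollary 7.9]{NOS2} is proved by a duality argument and yields only an $L^2(\Omega)$ bound of order $\sigma_\T$, not a weighted energy bound; your parallel claim that $\psf - \qsf_h$ can be bounded in $H^1_0(\varpi,\Omega)$ at rate $\sigma_\T$ is unsupported for the same regularity reason.) Second, the companion factor $\| \GRAD ( \ysf - \hat{\wbysf}_h ) \|_{L^2(\varpi^{-1},\Omega)}$ cannot carry the missing rate: $\hat{\wbysf}_h$ is the Ritz projection in the \emph{unweighted} inner product, so C\'ea's lemma is unavailable in the $\varpi^{-1}$-weighted norm, and with only $\ysf \in H^2(\Omega) \cap W^{1,\infty}(\Omega)$ (not $W^{2,\infty}(\Omega)$) even an $O(1)$ bound on this term requires max-norm gradient stability of the Ritz projection --- that is, exactly the Schatz--Wahlbin machinery you are trying to bypass. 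The product of your two factors is therefore $O(|\log h_\T|^{-1})$ at best, far short of the required $h_\T |\log h_\T| \| \GRAD \ysf \|_{L^\infty(\Omega)}$. The fix is the paper's route: note that since $b$ is the $L^2(\Omega)$ pairing, term $\mathrm{I}$ of Lemma~\ref{lem:abserror} only needs $\| \psf - \hat{\wbpsf}_h \|_{L^2(\Omega)}$ (this is the ``slight modification'' of the abstract lemma), bound $\psf - \qsf_h$ in $L^2(\Omega)$ by \cite[Corollary 7.9]{NOS2}, and quote the pointwise finite element estimate for $\ysf - \hat{\wbysf}_h$ rather than re-deriving it.
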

\begin{proof}
We follow Lemma~\ref{lem:abserror} with slight modifications. The term $\textrm{I}$
in \eqref{I} is estimated in two steps. In fact, since $(\usf_h - \usf, \hat{\wbpsf}_h - \psf_h)_{L^2(\Omega)} \leq 0$, we have
\[
 \textrm{I} \leq (\usf_h - \usf, \psf - \hat{\wbpsf}_h )_{L^2(\Omega)} \leq \frac\lambda4 \| \usf_{h} - \usf \|_{L^2(\Omega)}^2
  + \frac1\lambda \| \psf - \hat{\wbpsf}_h \|_{L^2(\Omega)}^2.
\]
We now analyze the second term of the previous expression. Let $\qsf_h \in \V(\T)$ solve
\begin{equation}
\label{eq:qh}
  a(w_h,\qsf_h ) = \sum_{z \in \calZ} (\ysf(z) - y_z) w_h(z) \quad \forall w_h \in \V(\T).
\end{equation}
The conclusion of \cite[Corollary 7.9]{NOS2} immediately yields
\[
  \| \psf - \qsf_h \|_{L^2(\Omega)} \lesssim \sigma_\T \| \GRAD\psf \|_{L^2(\varpi,\Omega)},
\]
so that it remains to estimate $\qsf_h - \hat{\wbpsf}_h$. We now invoke \cite[Theorem 6.1]{NOS2} with $p = q = 2$, $\rho = 1$ and $\omega = \varpi$. Under this setting the compatibility condition \cite[inequality (6.2)]{NOS2} is satisfied, and then \cite[Theorem 6.1]{NOS2} yields
\[
\| \qsf_h - \hat{\wbpsf}_h \|_{L^2(\Omega)} \lesssim \| \GRAD( \qsf_h - \hat{\wbpsf}_h ) \|_{L^2(\varpi,\Omega),}
\]
where the hidden constant depends on $\Omega$ and the quotient between the radii of the balls inscribed and circumscribed in $\Omega$. Since $\qsf_h$ solves \eqref{eq:qh}, the discrete inf-sup conditions of \cite[Lemma 7.8]{NOS2} and the fact that $\delta_z \in H^1_0(\varpi^{-1},\Omega)'$ yield
\[
  \| \GRAD( \qsf_h - \hat{\wbpsf}_h ) \|_{L^2(\varpi,\Omega)}
  \lesssim \| \ysf - \ysf_h(\usf) \|_{L^\infty(\Omega)}.
\]
We now recall that $\hat{\wbysf}_h$ is the Galerkin projection of $\ysf$. In addition, since $n \in \{2,3\}$, $\Omega$ is a convex polytope and $\usf \in L^\infty(\Omega)$, we have that $\ysf \in W^{1,\infty}(\Omega)$ (\cf \cite{MR1156467,MR2495783,MR1143406}). Therefore standard pointwise estimates for finite elements \cite[Theorem 5.1]{MR637283} yield
\begin{equation}
  \| \ysf - \hat{\wbysf}_h \|_{L^\infty(\Omega)} \lesssim h_\T |\log h_\T| \| \GRAD \ysf \|_{L^\infty(\Omega)}.
  \label{eq:y_Linf1}
\end{equation}
In conclusion,
\[
\textrm{I} \leq \frac\lambda4 \| \usf_h - \usf \|_{L^2(\Omega)}^2 + c \sigma_\T^2 \left(\| \GRAD\psf \|^2_{L^2(\varpi,\Omega)} +  \| \GRAD \ysf \|^2_{L^\infty(\Omega)} \right) ,
\]
for some nonessential constant $c$.
% \HA{Using triangle inequality
% \[
%   \| \psf - \EO{\wbpsf_h(\usf)} \|_{L^2(\Omega)} \lesssim \| \psf - \mathsf{q}_h(\usf) \|_{L^2(\Omega)} + \| \mathsf{q}_h(\usf) - \wbpsf_h(\usf) \|_{L^2(\Omega)}   
% \]
% where $\mathsf{q}_h(\usf)$ solves \eqref{eq:absadjh} with $\ysf_h$ replaced by $\ysf$. The conclusion of \cite[Corollary 7.9]{NOS2} immediately yields $\| \psf - \mathsf{q}_h(\usf) \|_{L^2(\Omega)} \lesssim \sigma_\T \| \GRAD \psf \|_{L^(\varpi,\Omega)}$. Moreover, 
% \begin{align*}
%     \| \mathsf{q}_h(\usf) - \wbpsf_h(\usf) \|_{L^2(\Omega)}
%     &\lesssim \| \mathsf{q}_h(\usf) - \wbpsf_h(\usf) \|_{H^1_0(\varpi,\Omega)} 
%      \lesssim \| \ysf - \hat{\wbysf}_h \|_{L^\infty(\Omega)}     
% \end{align*}    
% where we have used the Sobolev-Poincar\'e inequality of \cite[Theorem 6.2]{NOS2}, the discrete inf-sup stability of \cite[Lemma 7.8]{NOS2} and fact that $\delta_z \in H^1_0(\varpi^{-1},\Omega)'$. Whence 
% \[
%   \| \psf - \wbpsf_h(\usf) \|_{L^2(\Omega)} \lesssim    
%   \sigma_\T \| \GRAD \psf \|_{L^2(\varpi,\Omega)} + \| \ysf - \hat{\wbysf}_h \|_{L^\infty(\Omega)} .
% \]
% }

We estimate the term $j'(\usf)(\Pi_{L^2} \usf -\usf)$ as follows:
\begin{align*}
  j'(\usf)(\Pi_{L^2} \usf -\usf) &=  (\psf + \lambda \usf, \Pi_{L^2}\usf-\usf)_{L^2(\Omega)}  = 
  (\psf + \lambda \usf - \Pi_{L^2}(\psf + \lambda \usf ), \Pi_{L^2}\usf-\usf)_{L^2(\Omega)} \\
  &\leq 
  \frac12 \| \Pi_{L^2} \usf - \usf \|_{L^2(\Omega)}^2 + \frac{1}{2}\| \psf - \Pi_{L^2} \psf \|_{L^2(\Omega)}^2 
  \\
  &\leq \frac12 \| \Pi_{L^2} \usf - \usf \|_{L^2(\Omega)}^2 + c \sigma_\T^2 \| \GRAD \psf \|_{L^2(\varpi,\Omega)}^2,
\end{align*}
for some nonessential constant $c$. We have used the properties of $\Pi_{L^2}$, together with the Sobolev-Poincar\'e inequality of \cite[Theorem 6.2]{NOS2}; see also \cite[Corollary 7.9]{NOS2}.

We now proceed to estimate the term $\textrm{III}$ in the proof of Lemma~\ref{lem:abserror} as follows:
\begin{align*}
\textrm{III} & := b(\Pi_{L^2} \usf - \usf, \psf_{h} - \wbpsf_{h}(\usf) ) = (\Pi_{L^2} \usf - \usf, \psf_{h} - \wbpsf_{h}(\usf)   )_{L^2(\Omega)}\\
&\leq \frac1{2\varepsilon} \| \Pi_{L^2} \usf - \usf \|_{L^2(\Omega)}^2
+ \frac\varepsilon2 \|  \psf_{h} - \wbpsf_{h}(\usf) - \Pi_{L^2}(  \psf_{h} - \wbpsf_{h}(\usf)  ) \|_{L^2(\Omega)}^2 \\
&\leq \frac1{2\varepsilon} \| \Pi_{L^2} \usf - \usf \|_{L^2(\Omega)}^2 + c \varepsilon \sigma_\T^2 \| \GRAD ( \psf_{h} - \wbpsf_{h}(\usf) ) \|_{L^2(\varpi,\Omega)}^2,
\end{align*}
where we have used the properties of $\Pi_{L^2}$ together with the Sobolev-Poincar\'e inequality of \cite[Theorem 6.2]{NOS2}. In the previous estimate $\varepsilon>0$ is arbitrary and $c$ is a nonessential constant; see Lemma \ref{lem:abserror}. Using now the fact that $\delta_z \in H^1_0(\varpi^{-1},\Omega)'$, and the discrete inf-sup stability of \cite[Lemma 7.8]{NOS2}, we have that
\begin{align}
\nonumber
\| \psf_{h} - \hat{\wbpsf}_{h} \|_{H^1_0(\varpi,\Omega)} &\lesssim \| \ysf_h - \hat{\wbysf}_h  \|_{L^\infty(\Omega)}\\
&\lesssim \fraki_\T \| \hat{\wbysf}_h - \ysf_h \|_{H^1_0(\Omega)} \leq \fraki_\T \| \usf - \usf_h \|_{L^2(\Omega)},
\label{eq:y_Linf2}
\end{align}
where $\fraki_\T$ is the mesh-dependent factor in the inverse inequality between $L^\infty(\Omega)$ and $H^1(\Omega)$ (see \cite[Lemma 4.9.2]{BrennerScott} for $n=2$ and \cite[Lemma 1.142]{Guermond-Ern} for $n = 3$):
\begin{equation}
\label{eq:i_T}
\fraki_\T =  ( 1 + |\log h_\T| )^\frac{1}{2} \quad \textrm{if} \quad n = 2, \quad \textrm{and}\quad \fraki_\T = h_\T^{-1/2} \quad \textrm{if} \quad n = 3.
\end{equation}
Therefore, we have derived 
\[
  \textrm{III} \leq \frac1{2\varepsilon}\| \Pi_{L^2} \usf - \usf \|_{L^2(\Omega)}^2 + c \varepsilon \sigma_\T^2 \fraki_\T^2 \| \usf - \usf_h \|_{L^2(\Omega)}^2.
\]
We now examine the product $\sigma_\T \fraki_\T$ for $h_{\T}$ sufficiently small:
\[
  \sigma_\T \fraki_\T = \begin{dcases}
                          h_\T |\log h_\T|^{\tfrac{3}{2}} & \textrm{if} \quad n = 2,\\
                          |\log h_\T| & \textrm{if} \quad n = 3,
                        \end{dcases}
\]
Collecting the derived estimates we arrive at the desired estimates \eqref{eq:tracking_estimate1}--\eqref{eq:tracking_estimate2} by considering a judicious choice of $\varepsilon$. To be precise, when $n = 2$, we set $\varepsilon = 1$. If $n=3$, we consider $\varepsilon = c |\log h_{\T}|^{-2}$.
\end{proof}

\begin{proposition}[regularity of $\usf$]
\label{prop:regup}
If $\usf$ solves \eqref{eq:defofJp}--\eqref{eq:cc2} then $\usf \in H^1(\varpi,\Omega)$. 
\end{proposition}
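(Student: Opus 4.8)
The plan is to derive the pointwise characterization of the optimal control $\usf$ from the variational inequality \eqref{eq:VIp} and then read off the claimed $H^1(\varpi,\Omega)$ regularity from the regularity of the optimal adjoint state $\psf$. The key observation is that the weighted space $H^1(\varpi,\Omega)$ is the \emph{natural} space for the adjoint state: by \eqref{eq:adjp} the adjoint $\psf$ solves a Poisson problem with a sum of Dirac masses on the right-hand side, and by \cite[Lemma 7.7]{NOS2} this problem is well posed precisely in $H^1_0(\varpi,\Omega)$. So I expect $\psf \in H^1_0(\varpi,\Omega)$ to be the driving input, and the whole proposition amounts to transferring this regularity through the projection formula to $\usf$.

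First I would recall, exactly as in the proof of the rate-of-convergence theorem in Section~\ref{sub:discrA}, that the variational inequality \eqref{eq:VIp} together with the box constraints \eqref{eq:Uz} is equivalent to the pointwise projection formula
\[
  \usf = \max\left\{ \asf, \min\left\{ \bsf, -\tfrac1\lambda \psf \right\} \right\},
\]
which follows from the standard characterization in \cite[Section 3.6.3]{Tbook}; note that here, in contrast to Section~\ref{sec:A}, the control space is the unweighted $L^2(\Omega)$, so no extra weight factor multiplies $\psf$. Second, I would invoke that $\psf \in H^1_0(\varpi,\Omega)$, which is guaranteed by the well-posedness of \eqref{eq:adjp} in that space. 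The remaining step is to show that the truncation (composition with the Lipschitz maps $\min$ and $\max$, equivalently $t \mapsto \max\{\asf,\min\{\bsf,-t/\lambda\}\}$) preserves membership in $H^1(\varpi,\Omega)$.

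For this last step I would appeal to a chain-rule / Stampacchia-type result for Nemytskii operators generated by Lipschitz functions, namely \cite[Theorem A.1]{KSbook}, exactly as was done to conclude $\usf \in H^1(\omega^{-1},\Omega)$ in Section~\ref{sub:discrA}. The function $\phi(t) = \max\{\asf,\min\{\bsf,-t/\lambda\}\}$ is Lipschitz with $|\phi'| \le 1/\lambda$ almost everywhere, so $\usf = \phi(\psf)$ satisfies $\usf \in L^2(\varpi,\Omega)$ with $|\GRAD \usf| \le \tfrac1\lambda |\GRAD \psf|$ almost everywhere, whence $\GRAD \usf \in L^2(\varpi,\Omega)$ and therefore $\usf \in H^1(\varpi,\Omega)$. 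The only subtlety worth flagging is that the truncation theorem of \cite{KSbook} must be applicable in the \emph{weighted} Sobolev space rather than the classical one; since $\varpi \in A_2$ and the relevant spaces $W^{1,2}(\varpi,\Omega)$ admit dense smooth functions and the chain rule for Lipschitz compositions (by the density and approximation facts recorded in Section~\ref{sub:weights}), this extension is routine.

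\textbf{Main obstacle.} The one genuinely nontrivial point is justifying the pointwise chain rule in the weighted setting: verifying that $\phi(\psf)$ lies in $W^{1,2}(\varpi,\Omega)$ with the expected gradient bound, rather than merely in $L^2(\varpi,\Omega)$. In the unweighted case this is classical Stampacchia theory, but here one must ensure that the truncation argument survives the weight $\varpi$ degenerating (or blowing up) near the observation points $z \in \calZ$. I expect this to go through because the $A_2$ condition yields exactly the density of smooth functions and the doubling/strong-doubling structure needed to run the standard approximation argument, so the composition estimate $|\GRAD \phi(\psf)| \le \tfrac1\lambda|\GRAD\psf|$ can be established on smooth approximants and passed to the limit in $L^2(\varpi,\Omega)$.
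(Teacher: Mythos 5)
Your proposal is correct and follows essentially the same route as the paper: the pointwise projection formula $\usf = \max\{\asf,\min\{\bsf,-\psf/\lambda\}\}$ from \cite[Section 3.6.3]{Tbook}, the adjoint regularity $\psf \in H^1_0(\varpi,\Omega)$ coming from the well-posedness of \eqref{eq:adjp}, and the Lipschitz-truncation result \cite[Theorem A.1]{KSbook} to transfer that regularity to $\usf$. The paper states this in two lines without discussing the weighted extension of the truncation theorem; your extra care on that point (and on the absence of the weight factor in the projection formula, in contrast to Section~\ref{sec:A}) is consistent with, not divergent from, the paper's argument.
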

\begin{proof}
From \cite[Section 3.6.3]{Tbook}, $\usf$ solves \eqref{eq:VIp} if and only if
\[
  \usf = \max \left\{ \asf, \min\left\{ \bsf, -\frac1\lambda \psf\right\} \right\}.
\]
This immediately yields $\usf \in H^1(\varpi,\Omega)$ by invoking \cite[Theorem A.1]{KSbook}.
\end{proof}

Using this smoothness and an interpolation theorem between weighted spaces we can bound the projection error in Corollary~\ref{col:errp} and finish the error estimates \eqref{eq:tracking_estimate1}--\eqref{eq:tracking_estimate2} as follows.

\begin{theorem}[rates of convergence]
\label{th:tracking_error_estimates}
In the setting of Corollary~\ref{col:errp} we have
\begin{equation}
\label{eq:tracking_error_estimates_2d}
  \| \usf - \usf_h \|_{L^2(\Omega)} \lesssim  h_{\T} |\log h_{\T}|\left(\| \GRAD\psf \|_{L^2(\varpi,\Omega)} +  \| \GRAD \ysf \|_{L^\infty(\Omega)} \right)
\end{equation}
for $n=2$. If $n=3$, we have
\begin{equation}
\label{eq:tracking_error_estimates_3d}
  \| \usf - \usf_h \|_{L^2(\Omega)} \lesssim h_{\T}^{\frac{1}{2}} |\log h_{\T}|^2 \left(\| \GRAD\psf \|_{L^2(\varpi,\Omega)} +  \| \GRAD \ysf \|_{L^\infty(\Omega)} \right).
\end{equation}
The hidden constants in both estimates are independent of $\T$, the continuous and discrete optimal pairs.
\end{theorem}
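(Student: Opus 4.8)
The plan is to start from the two projection-error estimates of Corollary~\ref{col:errp}, namely \eqref{eq:tracking_estimate1} for $n=2$ and \eqref{eq:tracking_estimate2} for $n=3$, and to bound the only remaining data-dependent term in them, the projection error $\| \usf - \Pi_{L^2} \usf \|_{L^2(\Omega)}$. Everything else on the right-hand side of those estimates already carries the factor $\sigma_\T = h_\T^{2-n/2}|\log h_\T|$, which for $n=2$ gives $h_\T|\log h_\T|$ and for $n=3$ gives $h_\T^{1/2}|\log h_\T|$, so the structure of the final rates is essentially dictated by $\sigma_\T$ together with the logarithmic prefactor in \eqref{eq:tracking_estimate2}.

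\medskip

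The main ingredient is the regularity assertion of Proposition~\ref{prop:regup}, which tells us that $\usf \in H^1(\varpi,\Omega)$. With this smoothness in hand, I would invoke a weighted approximation estimate for the $L^2(\Omega)$-projection $\Pi_{L^2}$ onto piecewise constants. The standard (unweighted) first-order estimate $\| v - \Pi_{L^2} v \|_{L^2(T)} \lesssim h_T \| \GRAD v \|_{L^2(T)}$ has to be replaced by its weighted analogue, of the form
\[
  \| \usf - \Pi_{L^2} \usf \|_{L^2(\Omega)} \lesssim h_\T \| \GRAD \usf \|_{L^2(\varpi,\Omega)},
\]
which follows from the Poincar\'e-type / interpolation estimates in weighted spaces established in \cite[Theorem 6.2]{NOS2} (the same tool already used in the proof of Corollary~\ref{col:errp}), using that $\varpi \in A_2$. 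Here I should be careful about the mismatch between the unweighted norm on the left and the weighted norm on the right; the embedding machinery of \cite{NOS2} handles precisely this kind of passage, at the cost of the logarithmic factors already bookkept in $\sigma_\T$. Substituting this bound into \eqref{eq:tracking_estimate1} directly produces the $n=2$ estimate \eqref{eq:tracking_error_estimates_2d} with the single $|\log h_\T|$ factor, since $\sigma_\T = h_\T|\log h_\T|$ already matches the target rate and $h_\T \le \sigma_\T$ is subsumed.

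\medskip

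For $n=3$ the extra subtlety is the leading factor $|\log h_\T|$ multiplying $\| \usf - \Pi_{L^2} \usf \|_{L^2(\Omega)}$ in \eqref{eq:tracking_estimate2}. Bounding the projection error by $h_\T \| \GRAD \usf \|_{L^2(\varpi,\Omega)}$ and noting that $h_\T \le h_\T^{1/2}|\log h_\T|$, the product with the leading logarithm gives $h_\T^{1/2}|\log h_\T|^2$, matching the $\sigma_\T$-weighted second term (where $\sigma_\T = h_\T^{1/2}|\log h_\T|$ acquires one more logarithm from the same prefactor bookkeeping). Thus both contributions collapse onto the common rate $h_\T^{1/2}|\log h_\T|^2$ appearing in \eqref{eq:tracking_error_estimates_3d}. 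I expect the \emph{main obstacle} to be not the algebra of combining powers of $h_\T$ and $|\log h_\T|$, but rather verifying that the weighted interpolation estimate for $\Pi_{L^2}$ is applicable with the weight $\varpi$ of \eqref{eq:defofoweight}; this rests on $\varpi \in A_2$ (\cite[Lemma 7.5]{NOS2}) and on the compatibility of the $A_2$ constant with the quasi-uniformity of the mesh, exactly as in the hypotheses of \cite[Theorem 6.2]{NOS2}. Once that estimate is in place, the proof reduces to substitution and the elementary comparison of the resulting $h_\T$–$|\log h_\T|$ products, which yields the two claimed rates.
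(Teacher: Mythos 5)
Your overall strategy is exactly the paper's: use Proposition~\ref{prop:regup} to get $\usf \in H^1(\varpi,\Omega)$, bound the projection error $\| \usf - \Pi_{L^2} \usf \|_{L^2(\Omega)}$ with the weighted Sobolev--Poincar\'e machinery of \cite[Theorem 6.2]{NOS2}, and substitute into Corollary~\ref{col:errp}. However, the specific estimate you assert,
\[
  \| \usf - \Pi_{L^2} \usf \|_{L^2(\Omega)} \lesssim h_\T \| \GRAD \usf \|_{L^2(\varpi,\Omega)},
\]
is not what \cite[Theorem 6.2]{NOS2} delivers, and it is in fact false for the weight $\varpi$ of \eqref{eq:defofoweight} with a constant uniform in $\T$. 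The left-hand side is unweighted while the right-hand side carries a weight that \emph{vanishes} at each $z \in \calZ$ (like $1/\log^2\dist_z$ for $n=2$, like $\dist_z/\log^2\dist_z$ for $n=3$), so controlling the unweighted error by the weighted gradient costs, on cells of size $h_\T$ near $z$, a factor of order $|\log h_\T|$ when $n=2$ and $h_\T^{-1/2}|\log h_\T|$ when $n=3$. To see the failure concretely, test with rescaled bumps $v_h(x) = \phi((x-z)/h_\T)$, $\phi$ a fixed nonconstant bump: for $n=3$ one computes $\| v_h - \Pi_{L^2} v_h \|_{L^2(\Omega)} \sim h_\T^{3/2}$ while $h_\T \| \GRAD v_h \|_{L^2(\varpi,\Omega)} \sim h_\T^{2}/|\log h_\T|$, so the ratio blows up like $h_\T^{-1/2}|\log h_\T|$. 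This mismatch is precisely why the sharp estimate in \cite[Theorem 6.2, Corollary 7.9]{NOS2} --- and the one the paper actually uses at this step --- reads
\[
  \| \usf - \Pi_{L^2} \usf \|_{L^2(\Omega)} \lesssim \sigma_\T \| \GRAD \usf \|_{L^2(\varpi,\Omega)}, \qquad \sigma_\T = h_\T^{2-n/2}|\log h_\T|,
\]
rather than with the factor $h_\T$. You flagged the weighted/unweighted mismatch as the main obstacle, but then resolved it with a bound that is too strong.

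Fortunately your error overshoots in the harmless direction, so the theorem survives with the correct (weaker) bound and the repair is a one-line substitution of $\sigma_\T$ for $h_\T$. For $n=2$, $\sigma_\T = h_\T|\log h_\T|$ already matches the target rate in \eqref{eq:tracking_error_estimates_2d}; for $n=3$, the leading factor $|\log h_\T|$ in \eqref{eq:tracking_estimate2} multiplies $\sigma_\T = h_\T^{1/2}|\log h_\T|$ to give exactly the $h_\T^{1/2}|\log h_\T|^2$ of \eqref{eq:tracking_error_estimates_3d}; no extra logarithm is ``acquired'' by the second term, one simply uses $h_\T^{1/2}|\log h_\T| \le h_\T^{1/2}|\log h_\T|^2$. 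One further point, left implicit both by you and by the paper: the substitution produces the term $\| \GRAD \usf \|_{L^2(\varpi,\Omega)}$, which is absent from the right-hand sides of \eqref{eq:tracking_error_estimates_2d}--\eqref{eq:tracking_error_estimates_3d}; it is absorbed because the projection formula $\usf = \max \{ \asf, \min\{ \bsf, -\tfrac{1}{\lambda}\psf \} \}$ together with \cite[Theorem A.1]{KSbook} yields $\| \GRAD \usf \|_{L^2(\varpi,\Omega)} \lesssim \lambda^{-1} \| \GRAD \psf \|_{L^2(\varpi,\Omega)}$.
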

\begin{proof}
We only need to bound the projection error $\| \usf - \Pi_{L^2} \usf\|_{L^2(\Omega)}$. Proposition~\ref{prop:regup} yields $\usf \in H^1(\varpi,\Omega)$, then, invoking \cite[Theorem 6.2]{NOS2}, we derive
\[
  \| \usf - \Pi_{L^2} \usf\|_{L^2(\Omega)} \lesssim \sigma_\T \| \GRAD \usf \|_{L^2(\varpi,\Omega)}.
\]
Substituting the previous estimate in the conclusion of Corollary~\ref{col:errp} we derive the claimed convergence rates.
\end{proof}

On the basis of the previous results we now derive an error estimate for the approximation of the state variable. 
% To accomplish this task, we introduce the following discrete and pointwise norm:
% \[
%  \| \ysf - \ysf_h  \|_{\mathcal{Z}} := \left( \sum_{z \in \calZ} | \ysf(z) - \ysf_h(z) |  \right)^{\frac{1}{2}},
% \]
% where $\calZ$ denotes the set of observable points.

\begin{theorem}[rates of convergence]
\label{th:tracking_error_estimates_state}
In the setting of Corollary~\ref{col:errp} we have
\begin{equation}
\label{eq:tracking_error_estimates_2d_state}
  \| \ysf - \ysf_h  \|_{L^{\infty}(\Omega)} \lesssim h_{\T} |\log h_{\T}| \left(\| \GRAD\psf \|_{L^2(\varpi,\Omega)} +  \| \GRAD \ysf \|_{L^\infty(\Omega)} \right),
\end{equation}
for $n=2$. If $n=3$, we have
\begin{equation}
\label{eq:tracking_error_estimates_3d_state}
\| \ysf - \ysf_h  \|_{L^{\infty}(\Omega)} \lesssim h_{\T}^{\frac{1}{2}} |\log h_{\T}|^2 \left(\| \GRAD\psf \|_{L^2(\varpi,\Omega)} +  \| \GRAD \ysf \|_{L^\infty(\Omega)} \right).
\end{equation}
The hidden constants in both estimates are independent of $\T$, and the continuous and discrete optimal pairs.
\end{theorem}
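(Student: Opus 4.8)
The plan is to bound the state error $\| \ysf - \ysf_h \|_{L^\infty(\Omega)}$ by splitting it through the intermediate Galerkin projection $\hat{\wbysf}_h$ defined in \eqref{eq:hatyh}, writing
\[
  \| \ysf - \ysf_h \|_{L^\infty(\Omega)} \leq \| \ysf - \hat{\wbysf}_h \|_{L^\infty(\Omega)} + \| \hat{\wbysf}_h - \ysf_h \|_{L^\infty(\Omega)}.
\]
The first term is the finite element approximation error for the state equation with the \emph{fixed} continuous optimal control $\usf$, and it is already controlled: since $\usf \in L^\infty(\Omega)$ and $\Omega$ is a convex polytope with $n \in \{2,3\}$, we have $\ysf \in W^{1,\infty}(\Omega)$, so the pointwise estimate \eqref{eq:y_Linf1} gives exactly $\| \ysf - \hat{\wbysf}_h \|_{L^\infty(\Omega)} \lesssim h_\T |\log h_\T| \| \GRAD \ysf \|_{L^\infty(\Omega)}$.

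For the second term I would exploit that $\hat{\wbysf}_h - \ysf_h$ is a difference of two discrete states driven by the control difference $\usf - \usf_h$: subtracting \eqref{eq:abstateh} from \eqref{eq:hatyh} shows $\hat{\wbysf}_h - \ysf_h$ solves the discrete Laplace problem with right-hand side $b(\usf - \usf_h, \cdot) = (\usf - \usf_h, \cdot)_{L^2(\Omega)}$. To move from the $H^1$ a priori control to an $L^\infty$ bound I would apply the inverse inequality \eqref{eq:i_T}, exactly as was done in \eqref{eq:y_Linf2}, to obtain
\[
  \| \hat{\wbysf}_h - \ysf_h \|_{L^\infty(\Omega)} \lesssim \fraki_\T \| \hat{\wbysf}_h - \ysf_h \|_{H^1_0(\Omega)} \lesssim \fraki_\T \| \usf - \usf_h \|_{L^2(\Omega)},
\]
where the second inequality is the stability of the discrete state equation. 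At this point I would invoke the control error rates already established in Theorem~\ref{th:tracking_error_estimates}, namely $\| \usf - \usf_h \|_{L^2(\Omega)} \lesssim h_\T |\log h_\T|(\cdots)$ for $n=2$ and $\lesssim h_\T^{1/2} |\log h_\T|^2 (\cdots)$ for $n=3$.

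The key bookkeeping step is then to multiply the control rate by $\fraki_\T$ from \eqref{eq:i_T} and check that the product matches the claimed state rate. For $n=2$, $\fraki_\T = (1 + |\log h_\T|)^{1/2}$, so $\fraki_\T \| \usf - \usf_h \|_{L^2(\Omega)} \lesssim h_\T |\log h_\T|^{3/2}(\cdots)$; for $n=3$, $\fraki_\T = h_\T^{-1/2}$, giving $\fraki_\T \| \usf - \usf_h \|_{L^2(\Omega)} \lesssim |\log h_\T|^2 (\cdots)$. The main subtlety is that the $n=3$ product degrades the power of $h_\T$ to $h_\T^0$, which would \emph{not} match the claimed $h_\T^{1/2}$ rate in \eqref{eq:tracking_error_estimates_3d_state}. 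This is the point I expect to require care: rather than passing through the inverse inequality on the full control error, I would instead return to the superconvergence-type argument used inside the proof of Corollary~\ref{col:errp}, controlling $\| \hat{\wbysf}_h - \ysf_h \|_{L^\infty(\Omega)}$ directly against a sharper quantity. The cleanest route is to observe that the estimates \eqref{eq:tracking_error_estimates_2d}--\eqref{eq:tracking_error_estimates_3d} for the control and the pointwise state error \eqref{eq:y_Linf1} for $\ysf - \hat{\wbysf}_h$ share the \emph{same} right-hand side rates $h_\T|\log h_\T|$ and $h_\T^{1/2}|\log h_\T|^2$; so I would combine the triangle inequality with the already-proven control estimate packaged so that the discrete difference inherits the control rate without an extra inverse factor, i.e.\ by absorbing $\fraki_\T$ into the analysis as in the derivation of \eqref{eq:tracking_error_estimates_3d} itself. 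Concretely, the state rate is obtained by adding \eqref{eq:y_Linf1} to the bound on $\hat{\wbysf}_h - \ysf_h$ and recognizing that the dominant contribution is precisely the control rate of Theorem~\ref{th:tracking_error_estimates}, yielding \eqref{eq:tracking_error_estimates_2d_state}--\eqref{eq:tracking_error_estimates_3d_state}.
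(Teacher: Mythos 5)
Your decomposition through the \emph{discrete} auxiliary state $\hat{\wbysf}_h$ is not the one that works, and you detect the symptom yourself: converting the discrete difference $\hat{\wbysf}_h - \ysf_h$ from $H^1$ to $L^\infty$ via the inverse inequality \eqref{eq:i_T} costs a factor $\fraki_\T$, which turns the control rates of Theorem~\ref{th:tracking_error_estimates} into $h_\T|\log h_\T|^{3/2}$ for $n=2$ (already missing the claimed $h_\T|\log h_\T|$ by a logarithm) and, fatally, into $|\log h_\T|^2$ with \emph{no} power of $h_\T$ for $n=3$. The repair you then sketch --- that the discrete difference should ``inherit the control rate without an extra inverse factor'' by ``absorbing $\fraki_\T$ into the analysis as in the derivation of \eqref{eq:tracking_error_estimates_3d}'' --- is not an argument. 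The absorption in Corollary~\ref{col:errp} works only because there the offending term $c\,\varepsilon\,\sigma_\T^2 \fraki_\T^2 \|\usf - \usf_h\|_{L^2(\Omega)}^2$ comes with a free parameter $\varepsilon$ and is hidden in the left-hand side $\lambda \|\usf - \usf_h\|_{L^2(\Omega)}^2$ of the variational-inequality estimate; here you need an absolute bound on $\|\hat{\wbysf}_h - \ysf_h\|_{L^\infty(\Omega)}$, there is nothing on the left to absorb into, and the $\fraki_\T$ loss remains. So as written the proposal proves neither \eqref{eq:tracking_error_estimates_2d_state} nor \eqref{eq:tracking_error_estimates_3d_state}.

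The missing idea is to use the \emph{other} intermediate state: let $\hat{\wbysf}$ solve the continuous problem $a(\hat{\wbysf},v) = (\usf_h, v)_{L^2(\Omega)}$ for all $v \in H^1_0(\Omega)$, i.e.\ the continuous solution whose datum is the discrete control, and split
\[
  \| \ysf - \ysf_h \|_{L^\infty(\Omega)} \leq \| \ysf - \hat{\wbysf} \|_{L^\infty(\Omega)} + \| \hat{\wbysf} - \ysf_h \|_{L^\infty(\Omega)}.
\]
The second term is then a pure Galerkin error (with datum $\usf_h$), and the Schatz--Wahlbin pointwise estimate gives $h_\T |\log h_\T| \, \| \GRAD \hat{\wbysf} \|_{L^\infty(\Omega)}$, which is \eqref{eq:y_Linf_aux}. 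The first term is a difference of two \emph{continuous} solutions driven by $\usf - \usf_h$, and the passage from the $L^2$ control error to an $L^\infty$ state error is carried out at the continuous level: by elliptic regularity \cite{JK:95} there is $r > n$ such that $\| \ysf - \hat{\wbysf} \|_{W^{1,r}(\Omega)} \lesssim \| \usf - \usf_h \|_{L^2(\Omega)}$, and $W^{1,r}(\Omega) \hookrightarrow C(\bar\Omega)$ for $r > n$. No inverse inequality ever appears, so the control rates of Theorem~\ref{th:tracking_error_estimates} transfer to the state without loss, yielding exactly \eqref{eq:tracking_error_estimates_2d_state}--\eqref{eq:tracking_error_estimates_3d_state}.
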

\begin{proof}
We start with a simple application of the triangle inequality:
\begin{equation}
\label{eq:pointwise_y_1}
 \|\ysf- \ysf_h   \|_{L^{\infty}(\Omega)} \leq \|\ysf - \hat{\wbysf} \|_{L^{\infty}(\Omega)} + \| \hat{\wbysf} -  \ysf_h   \|_{L^{\infty}(\Omega)},
\end{equation}
where $\hat{\wbysf}$ solves $a(\hat{\wbysf},v) = (\usf_h,v)$ for all $v \in H^1_0(\Omega)$. The second term on the right hand side of the previous inequality is controlled in view of standard pointwise estimates for finite elements. In fact, \cite[Theorem 5.1]{MR637283} yields
\begin{equation}
\label{eq:y_Linf_aux}
 \|\hat{\wbysf} - \ysf_h \|_{L^{\infty}(\Omega)} \lesssim h_\T |\log h_\T| \| \nabla \hat{\wbysf} \|_{L^\infty(\Omega)} .
\end{equation}
To control the first term on the right hand side of \eqref{eq:pointwise_y_1}, we invoke the results of \cite{JK:95} and conclude that there is $r>n$ such that
\[
  \| \ysf - \hat{\wbysf}   \|_{W^{1,r}(\Omega)} \lesssim \|  \usf -\usf_h \|_{L^2(\Omega)}.
\]
This, in conjunction with the embedding $W^{1,r}(\Omega) \hookrightarrow C(\bar \Omega)$ for $r > n$, yields
\begin{equation*}
\| \ysf - \hat{\wbysf}   \|_{L^{\infty}(\Omega)} \lesssim \|  \usf -\usf_h \|_{L^2(\Omega)}.
\end{equation*}
In view of \eqref{eq:y_Linf_aux}, the previous estimate and the results of Theorem \ref{th:tracking_error_estimates} allow us to derive the desired error estimates. 
\end{proof}

\section{Optimization with singular sources}
\label{sec:delta}
Let us remark that, since the formulation of the adjoint problem \eqref{eq:adjp} led to an elliptic problem with Dirac deltas on the right hand side, the problem with point sources on the state \eqref{eq:defofJd}--\eqref{eq:cc3} is, in a sense, dual to one with point observations \eqref{eq:defofJp}--\eqref{eq:cc2}. In the latter, the functional space for the adjoint variable is the one needed for the state variable in \eqref{eq:defofJd}--\eqref{eq:cc3}. The analysis will follow 
the one presented in Section \ref{sub:discrp}. It is important to comment that problem \eqref{eq:defofJd}--\eqref{eq:cc3} has been analyzed before. We refer the reader to  \cite{MR3225501} for the elliptic case and to \cite{MR2983016, MR3150173, MR3116646,MR2995178} for the parabolic one. It is our desire in this section to show how the theory of Muckenhoupt weights can be used to analyze and approximate problem \eqref{eq:defofJd}--\eqref{eq:cc3}. In doing this, it will be essential to assume that $\textrm{dist}(\calD,\partial \Omega) \geq d_{\calD}>0$.  Set
\begin{enumerate}[$\bullet$]
  \item $\bbH = L^2(\Omega)$ and $C = \id$.
  
  \item $\bbU = \R^l$.
  
  \item $\bbY_1 = H^1_0(\varpi,\Omega)$ and $\bbX_1 = H^1_0(\varpi^{-1},\Omega)$, with $\varpi$ defined, as in Section~\ref{sub:analysisp}, by \eqref{eq:defofoweight}.
  \item $\bbY_2 = \bbX_2 = H^1_0(\Omega)$ and
  \[
    a(v,w) = \int_\Omega \GRAD v(x)^\intercal \GRAD w(x) \diff x.
  \]

  \item The bilinear form $b: \bbU \times (\bbX_1 + \bbX_2)$ is 
  \[
    b(\mathbf{v},w) = \sum_{z \in \calD} v_z \langle \delta_z , w\rangle_{H^1_0(\varpi^{-1},\Omega)' \times H^1_0(\varpi^{-1},\Omega)}.
  \]
  Since, for $z \in \Omega$, $\delta_z \in H^1_0(\varpi^{-1},\Omega)'$, we have that $b$ is continuous on $\mathbb{R}^l \times H^1_0(\varpi^{-1},\Omega)$. 
%   In additon, if $n<4$, we have that $H^2(\Omega) \hookrightarrow C(\bar\Omega)$, and    then 
\end{enumerate}

For $\asf,\bsf \in \R^l$ with $\asf_z < \bsf_z$ we define the set of admissible controls as
\[
  \bbU_\delta = \left\{ \mathbf{u} \in \R^l: \asf_z \leq u_z \leq \bsf_z, \ \forall z \in \calD \right\}.
\]
The space of controls is already discrete, so we set $\bbU^h = \bbU$ and $\Uad^h = \bbU_\delta$. Finally we set, for $i=1,2$, $\bbX_i^h=\bbY_i^h = \V(\T)$. Since the bilinear form $b$ is not continuous on $\mathbb{U} \times \mathbb{X}_2$, we need to slightly modify the arguments of Lemma \ref{lem:abserror}. In what follows, for $v \in C(\bar \Omega)$ and $w \in \R^{l}$ we define
\begin{equation}
\label{eq:innerD}
  \langle v,w \rangle_{\calD}:= \sum_{z \in \mathcal{D}} v(z) w_z.
\end{equation}

In this setting, the main error estimate for problem \eqref{eq:defofJd}--\eqref{eq:cc3} is provided below. We comment that our proof is inspired in the arguments developed in \cite[Theorem~3.7]{MR3225501} and \cite{MR3116646,MR645661}.

\begin{theorem}[rates of convergence]
\label{th:errd}
Let $\bu$ and $\bu_h$ be the continuous and discrete optimal controls, respectively, and assume that for every $q \in (2,\infty)$, $y_d \in L^q(\Omega)$. Let $\epsilon>0$ and $\Omega_1$ be such that $\calD \Subset \Omega_1 \Subset \Omega$. If $n=2$, then
\begin{equation}
\label{eq:delta_2d}
   \| \bu - \bu_h \|_{\R^{l}} \lesssim h_\T^{2-\epsilon} \left( \| \bu \|_{\R^l} + \| \psf \|_{H^2(\Omega)} + \| \psf \|_{W^{2,r}(\Omega_1)} \right).
\end{equation} 
On the other hand, if $n = 3$, then
\begin{equation}
\label{eq:delta_3d}
  \| \bu - \bu_h \|_{\R^{l}} \lesssim h_\T^{1-\epsilon} \left( \| \bu \|_{\R^l} + \| \psf \|_{H^2(\Omega)} +\| \psf \|_{W^{2,r}(\Omega_1)} \right),
\end{equation}
where $r < n/(n-2)$. The hidden constants in both estimates are independent of $\T$, and the continuous and discrete optimal pairs.
\end{theorem}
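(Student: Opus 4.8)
The plan is to adapt the duality argument of \cite[Theorem~3.7]{MR3225501} and reduce the control error to a \emph{pointwise} estimate for the adjoint at the points of $\calD$. Since the controls are not discretized ($\bbU^h=\bbU=\R^l$, $\Uad^h=\bbU_\delta$, so $\Pi_\bbU=\id$), taking $\mathbf u=\bu_h$ in \eqref{eq:VIabs} and $\mathbf u_h=\bu$ in \eqref{eq:VIabsh} and adding the two inequalities gives
\[ \lambda\|\bu-\bu_h\|_{\R^l}^2 \le \sum_{z\in\calD}(u_{h,z}-u_z)\big(\psf(z)-\psf_h(z)\big), \]
which is meaningful because $\ysf-y_d\in L^2(\Omega)$ and $\Omega$ convex give $\psf\in H^2(\Omega)\hookrightarrow C(\bar\Omega)$, while $\psf_h\in\V(\T)$. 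I then freeze the control by introducing the continuous adjoint $\psf(\bu_h)$ solving $a(v,\psf(\bu_h))=(\ysf(\bu_h)-y_d,v)$ with $\ysf(\bu_h)=S\bu_h$. Writing $G_z\in H^1_0(\varpi,\Omega)$ for the Green's function $-\LAP G_z=\delta_z$, the representation $\psf(z)=(G_z,\ysf-y_d)_{L^2(\Omega)}$ and the identity $\ysf-\ysf(\bu_h)=\sum_{w}(u_w-u_{h,w})G_w$ yield
\[ \sum_{z\in\calD}(u_{h,z}-u_z)\big(\psf(z)-\psf(\bu_h)(z)\big)=-\sum_{z,w\in\calD}(u_z-u_{h,z})(u_w-u_{h,w})(G_z,G_w)_{L^2(\Omega)}\le 0, \]
the Gram matrix $\big((G_z,G_w)_{L^2(\Omega)}\big)$ being symmetric positive semidefinite. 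Hence, by Cauchy--Schwarz in $\R^l$, $\|\bu-\bu_h\|_{\R^l}\lesssim\max_{z\in\calD}|\psf(\bu_h)(z)-\psf_h(z)|$.

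To estimate this pointwise error, uniformly in $\bu_h$, I split it with the Ritz projection $R_h\psf(\bu_h)$ into $\psf(\bu_h)(z)-R_h\psf(\bu_h)(z)$ and $R_h\psf(\bu_h)(z)-\psf_h(z)$. Since $\ysf(\bu_h)=\sum_w u_{h,w}G_w\in L^r(\Omega)$ and $y_d\in L^q(\Omega)$ for every $q<\infty$, interior elliptic regularity gives $\psf(\bu_h)\in W^{2,r}(\Omega_1)$ for $r<n/(n-2)$, with $\psf(\bu_h)\in H^2(\Omega)$ globally. The interior maximum-norm estimates of \cite{MR645661} (\cf \cite{MR3116646}), applied on $\calD\Subset\Omega_0\Subset\Omega_1$, bound $\|\psf(\bu_h)-R_h\psf(\bu_h)\|_{L^\infty(\Omega_0)}$ by the local best approximation $\lesssim h_\T^{2-n/r}\|\psf(\bu_h)\|_{W^{2,r}(\Omega_1)}$ plus a global term $\lesssim h_\T^2\|\psf(\bu_h)\|_{H^2(\Omega)}$; letting $r\uparrow n/(n-2)$ produces the exponent $4-n$. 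For the discrete part, using the discrete Green's function $G_{h,z}\in\V(\T)$ defined by $a(w_h,G_{h,z})=w_h(z)$ (the Galerkin approximation of $G_z$), together with $a(v_h,R_h\psf(\bu_h)-\psf_h)=(\ysf(\bu_h)-\ysf_h,v_h)$, one gets
\[ R_h\psf(\bu_h)(z)-\psf_h(z)=(\ysf(\bu_h)-\ysf_h,G_{h,z})_{L^2(\Omega)}=\sum_{w\in\calD}u_{h,w}\,(G_w-G_{h,w},G_{h,z})_{L^2(\Omega)}. \]

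The main difficulty is this last sum, which pairs the \emph{singular} state error $G_w-G_{h,w}$ against the \emph{singular} discrete Green's function $G_{h,z}$: a direct Cauchy--Schwarz only gives $\|G_w-G_{h,w}\|_{L^2(\Omega)}\|G_{h,z}\|_{L^2(\Omega)}\lesssim\sigma_\T$, one order short of the target. I would instead exploit the smoothing of $(-\LAP)^{-1}$: writing $(G_w-G_{h,w},G_{h,z})=(G_w-G_{h,w},G_z)-(G_w-G_{h,w},G_z-G_{h,z})$, the first term equals $[(-\LAP)^{-1}(G_w-G_{h,w})](z)$ and is controlled by $\|G_w-G_{h,w}\|_{L^p(\Omega)}$ for any $p>n/2$, while the second is bounded by $\|G_w-G_{h,w}\|_{L^2(\Omega)}\|G_z-G_{h,z}\|_{L^2(\Omega)}\lesssim\sigma_\T^2=h_\T^{4-n}|\log h_\T|^2$. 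This requires two Green's function estimates: the weighted $L^2$ bound $\|G_z-G_{h,z}\|_{L^2(\Omega)}\lesssim\sigma_\T$ of \cite[Corollary~7.9]{NOS2} (finite since $\|\GRAD G_z\|_{L^2(\varpi,\Omega)}<\infty$), and an $L^p$ bound $\|G_z-G_{h,z}\|_{L^p(\Omega)}\lesssim h_\T^{2-n+n/p}|\log h_\T|^c$; choosing $p$ slightly above $n/2$ in the interval $(n/2,n/(n-2))$, which is nonempty precisely because $n<4$, yields $h_\T^{4-n-\epsilon}$.

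Collecting the three contributions, each is of order $h_\T^{4-n}$ up to logarithmic and $\epsilon$ losses, namely $h_\T^{2-\epsilon}$ for $n=2$ and $h_\T^{1-\epsilon}$ for $n=3$. Finally, the $\bu_h$-dependent regularity norms of $\psf(\bu_h)$ and $\ysf(\bu_h)$ are bounded uniformly by $\|\bu\|_{\R^l}$, $\|\psf\|_{H^2(\Omega)}$, $\|\psf\|_{W^{2,r}(\Omega_1)}$ and $\|y_d\|_{L^q(\Omega)}$ (using $\bu_h\to\bu$ and the linearity of $S$), and a judicious choice of $r<n/(n-2)$ absorbs the logarithmic factors into $\epsilon>0$, giving \eqref{eq:delta_2d}--\eqref{eq:delta_3d}.
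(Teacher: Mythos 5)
Your proof is correct in outline and, in its second half, takes a genuinely different route from the paper's. The first half is essentially the paper's argument in mirror image: the paper inserts the \emph{discrete} adjoint $\hat{\wbpsf}_h$ of \eqref{eq:hatph}, associated with the exact control, so that $\langle \hat{\wbpsf}_h - \psf_h, \bu_h - \bu\rangle_{\calD} = -\|\hat{\wbysf}_h - \ysf_h\|_{L^2(\Omega)}^2 \le 0$, whereas you insert the \emph{continuous} adjoint $\psf(\bu_h)$ associated with the discrete control, and your Gram-matrix term is exactly $-\|\ysf - \ysf(\bu_h)\|_{L^2(\Omega)}^2 \le 0$; after that, both arguments invoke the Schatz--Wahlbin interior maximum-norm estimates \cite{MR0431753} for a Ritz projection error of an adjoint state that lies in $H^2(\Omega)\cap W^{2,r}(\Omega_1)$, $r<n/(n-2)$, which produces the $h_\T^{4-n-\epsilon}$ contribution. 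The genuine divergence is in the remaining discrete term: the paper bounds $P_h=\qsf_h-\hat{\wbpsf}_h$ by the inverse inequality \eqref{eq:i_T} combined with an $L^1(\Omega)$ bound for $\ysf-\hat{\wbysf}_h$, obtained by duality against $\sgn(\ysf-\hat{\wbysf}_h)$ and the interior pointwise estimate \eqref{eq:Schatz}, yielding $\fraki_\T^2 h_\T^2|\log h_\T|^2\|\bu\|_{\R^l}$; you instead expand $R_h\psf(\bu_h)-\psf_h$ in discrete Green's functions and split $(G_w-G_{h,w},G_{h,z})$ through $G_z$, trading the inverse inequality for a smoothing argument. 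Both give $h_\T^{4-n}$ up to logarithms, so the rates agree; your version has the aesthetic advantage of avoiding the inverse inequality, the paper's has the advantage of using only ingredients it proves or cites.

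That last point is the one caveat: your argument hinges on the $L^p$ Green's-function estimate $\|G_z-G_{h,z}\|_{L^p(\Omega)}\lesssim h_\T^{2-n+n/p}|\log h_\T|^c$ for $p$ slightly above $n/2$, which you assert without proof or citation; without it the term $(G_w-G_{h,w},G_z)$ is not controlled and the whole second half collapses to the suboptimal $\sigma_\T$ bound you yourself flag. The estimate is true and can be supplied with the tools already in play: by the symmetry identity $(G_z-G_{h,z},g)=(v_g-v_{g,h})(z)$, where $-\LAP v_g = g$ and $v_{g,h}$ is its Galerkin approximation, it follows by duality over $g\in L^{p'}(\Omega)$ from the same interior maximum-norm machinery, since every $z\in\calD$ is uniformly interior by the standing assumption $d_\calD>0$ (for $n=3$ one takes $p'$ slightly below $3$, for $n=2$ any large finite $p'$, and the global part of the interior estimate is handled by the $L^2$ error since $p'\ge 2$). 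You should state and prove this as a lemma, or cite it. Finally, your closing step is too breezy: the bound is first obtained with $\|\psf(\bu_h)\|_{W^{2,r}(\Omega_1)}$, $\|\psf(\bu_h)\|_{H^2(\Omega)}$ and $\|\bu_h\|_{\R^l}$, and the conversion to the norms in the statement should go through $\|\psf(\bu_h)-\psf\|_{W^{2,r}(\Omega_1)}+\|\psf(\bu_h)-\psf\|_{H^2(\Omega)}\lesssim \|\bu-\bu_h\|_{\R^l}$, absorbing $h_\T^{4-n-\epsilon}\|\bu-\bu_h\|_{\R^l}$ into the left-hand side for $h_\T$ small, rather than via the vague appeal to ``$\bu_h\to\bu$'', which is circular since convergence is what is being proved.
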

\begin{proof}
We start the proof by noticing that, since $\ysf -y_d \in L^2(\Omega)$ and $\Omega$ is convex, standard regularity arguments \cite{Grisvard} yield $\psf \in H^2(\Omega)  \hookrightarrow C(\bar\Omega)$ . This guarantees that pointwise evaluations of $\psf$ are well defined. Moreover, since, in this setting, $\Uad^h = \Uad$ estimate \eqref{eq:aux} reduces to
% Consequently, the last two terms on the right hand side of \eqref{eq:aux} vanish and then \EO{allows us to write}
\[
\lambda \| \bu - \bu_h \|^2_{\R^{l}} \leq \langle \psf - \psf_h,  \bu_h - \bu \rangle_{\mathcal{D}},
\]
where $\langle \cdot ,  \cdot \rangle_{\mathcal{D}}$ is defined in \eqref{eq:innerD}. Adding and subtracting the solution to \eqref{eq:hatph} $\hat{\wbpsf}_h$, we obtain that
\begin{equation}
\label{eq:bu-bu_h}
\lambda \| \bu - \bu_h \|^2_{\R^l} \leq \langle\psf - \hat{\wbpsf}_h,  \bu_h - \bu \rangle_{\mathcal{D}} + \langle\hat{\wbpsf}_h - \psf_h,  \bu_h - \bu \rangle_{\mathcal{D}}.
 \end{equation}
This, in view of $\langle\hat{\wbpsf}_h - \psf_h,  \bu_h - \bu \rangle_{\mathcal{D}}  = - \|\hat{\wbysf}_h - \ysf_h   \|_{L^2(\Omega)}^2$, implies that
\begin{align}
\nonumber
\lambda \| \bu - \bu_h \|_{\R^{l}}^2 + \|\hat{\wbysf}_h - \ysf_h   \|_{L^2(\Omega)}^2 & \leq \langle \psf - \hat{\wbpsf}_h,  \bu_h - \bu \rangle_{\mathcal{D}}
\\ 
\label{eq:bu-bu_h2}
& \leq \langle \psf  - \qsf_h,  \bu_h - \bu \rangle_{\mathcal{D}} + \langle \qsf_h - \hat{\wbpsf}_h,  \bu_h - \bu \rangle_{\mathcal{D}},
\end{align}
where $\qsf_h$ is defined as the unique solution to
\[
  \qsf_h \in \V(\T): \quad a(w_h,\qsf_h ) = (\ysf - y_d,w_h)_{L^2(\Omega)} \quad \forall w_h \in \V(\T).
\]

Since, by assumption, we have that $d_\calD >0$ we can conclude that there are smooth subdomains $\Omega_0$ and $\Omega_1$ such that $ \calD \subset \Omega_0 \Subset \Omega_1 \Subset \Omega$. In view of \eqref{eq:bu-bu_h2}, this key property will allow us to derive interior $L^{\infty}$-estimates for $\psf - \qsf_h$ and $\qsf_h - \hat{\wbpsf}_h$.

Let us first bound the difference $\psf - \qsf_h$. To do this, we notice that, since $\ysf \in W_0^{1,s}(\Omega)$ for $s<n/(n-1)$, a standard Sobolev embedding result implies that $\ysf \in L^r(\Omega)$ with $r \leq ns/(n-s) <n/(n-2)$. Then, on the basis of the fact that $y_d \in L^{q}(\Omega)$  for $q < \infty$, interior regularity results guarantee that $\psf \in W^{2,r}(\Omega_1)$ for $r < n/(n-2)$. Consequently, since $\qsf_h$ corresponds to the Galerkin approximation of $\psf$, \cite[Theorem 5.1]{MR0431753} yields, when $n=2$, that for any $\epsilon>0$ we have
\begin{equation}
\label{eq:psf-qsf2d}
  \|\psf - \qsf_h \|_{L^{\infty}(\Omega_0)} \lesssim  
  \left( h_{\T}^{2-\epsilon} \| \psf \|_{W^{2,r}(\Omega_1)} + h_{\T}^2 \| \psf \|_{H^2(\Omega)} \right).
  %| \log h_{\T}| 
\end{equation}
When $n=3$, we have that $\psf \in H^1_0(\Omega) \cap W^{2,r}(\Omega_1)$ for $r<3$ and, as a consequence,
\begin{equation}
\label{eq:psf-qsf3d}
  \|\psf - \qsf_h \|_{L^{\infty}(\Omega_0)} \lesssim  
  \left( h_{\T}^{1-\epsilon} \| \psf \|_{W^{2,r}(\Omega_1)} + h_{\T}^2 \| \psf \|_{H^2(\Omega)} \right).
\end{equation}
% \begin{equation}
% \label{eq:psf-qsf}
%   \|\psf - \qsf_h \|_{L^{\infty}(\Omega_0)} \lesssim  
%   \left( h_{\T}^{2-n/r} | \log h_{\T}| \| \psf \|_{W^{2,r}(\Omega_1)} + h_{\T}^2 \| \psf \|_{H^2(\Omega)} \right).
%  \end{equation}
% Notice that $2-n/r = 3 - n/s < \AJS{4-n}$ for $n \in \{ 2,3\}$.
% Since $n \in \{2,3\}$ and $\Omega$ is a convex polytope we have $\psf \in H^2(\Omega)$ 
% %$\psf \in W^1_\infty(\Omega)$ \cite{MR1156467,MR2495783,MR1143406} 
% and \cite[Theorem 5.1]{MR637283} yields
% \[
%   \| \psf - \qsf_h \|_{L^\infty(\Omega)} \lesssim \sigma_\T | \psf |_{H^2(\Omega)}.
% \]

It remains then to estimate the difference $P_h= \qsf_h - \hat{\wbpsf}_h$. To do so we employ a duality argument that combines the ideas of \cite[Corollary 7.9]{NOS2} and \cite{MR3116646,MR645661}. We start by defining $\varphi \in H^1_0(\Omega)$ as the solution to
\begin{equation}
\label{eq:a_varphi}
  a(v,\varphi) = \int_\Omega \sgn( \ysf - \hat{\wbysf}_h ) v \quad \forall v \in H^1_0(\Omega),
\end{equation}
where $\hat{\wbysf}_h$ solves \eqref{eq:hatyh}.
Notice that $\| \sgn( \ysf - \hat{\wbysf}_h ) \|_{L^\infty(\Omega)} \leq 1$ for all $\T \in \Tr$. Therefore, \cite[Theorem 5.1]{MR0431753} followed by \cite[Theorem 5.1]{MR637283} lead to (see also \cite[Lemma 3.2]{MR3225501})
%similar arguments to the ones used to obtain \eqref{eq:psf-qsf} allow us to derive the interior estimate
% on the basis of \cite{MR1143406}, a similar argument to the one presented in \cite[\S 1.3]{NGS15} yields that there is $\alpha \in (0,1)$ depending on the domain such that $\AJS{\varphi} \in C^{1,\alpha}(\bar{\Omega})$. This implies
\begin{equation}
\label{eq:Schatz}
  \| \varphi - \varphi_h \|_{L^\infty(\Omega_0)} \lesssim h_\T^{2} |\log h_{\T}|^2
%   \| \bu \|_{\R^{l}}
\end{equation}
where $\varphi_h$ is the Galerkin projection of $\varphi$ and the hidden constant does not depend on $\T$ nor $\varphi$.
%; see \cite[Lemma 3.6]{MR3225501}.
In addition we have that $\varphi \in H^2(\Omega)\cap H^1_0(\Omega) \hookrightarrow H^1_0(\varpi^{-1},\Omega)$ \cite[Lemma 7.6]{NOS2}. Therefore $\varphi$ is a valid test function in the variational problem that $\ysf$ solves. Then, using the continuity of the bilinear form $a$ and Galerkin orthogonality we arrive at
\begin{align*}
  \| \ysf - \hat{\wbysf}_h \|_{L^1(\Omega)} &= \int_\Omega \sgn( \ysf - \hat{\wbysf}_h ) ( \ysf - \hat{\wbysf}_h ) = a(\ysf - \hat{\wbysf}_h, \varphi ) \\
    & = a( \ysf, \varphi - \varphi_h) = \langle \varphi - \varphi_h, \bu\rangle_\calD
    \lesssim \| \bu \|_{\R^l} \| \varphi - \varphi_h \|_{L^\infty(\Omega_0)} \\
    &\lesssim h_\T^2 |\log h_{\T}|^2 \| \bu \|_{\R^l},
\end{align*}
where in the last step we used estimate \eqref{eq:Schatz}.

We now recall that $P_h$ solves
\[
  a(w_h,P_h) = (\ysf - \hat{\wbysf}_h,w_h)_{L^2(\Omega)} \quad \forall w_h \in \V(\T),
\]
an inverse inequality and a stability estimate for the problem above yield
\[
  \| P_h \|_{L^\infty(\Omega)}^2 \lesssim \fraki_\T^2 \| \GRAD P_h \|_{L^2(\Omega)}^2 \leq \fraki_\T^2 
  \| \ysf - \hat{\wbysf}_h \|_{L^1(\Omega)} \| P_h \|_{L^\infty(\Omega)},
\]
where $\fraki_\T$ is defined in \eqref{eq:i_T}. In conclusion,
\begin{equation}
\label{eq:Ph}
  \| P_h \|_{L^\infty(\Omega)} \lesssim\fraki_\T^2 \| \ysf - \hat{\wbysf}_h \|_{L^1(\Omega)} \lesssim  \fraki_\T^2 h_\T^2 |\log h_\T|^2 \| \bu \|_{\R^{l}}.
\end{equation}

Combining the obtained pointwise bounds for $\psf - \qsf_h$ and $\qsf_h - \hat{\wbpsf}_h$ we obtain the desired estimates.
\end{proof}

\begin{remark}[comparison with the literature]\rm
\label{rm:comparison}
Reference \cite{MR3225501} claims to obtain better rates than those in Theorem \ref{th:errd}, namely they can trade the term $h_\T^{-\epsilon}$ by a logarithmic factor $|\log h_\T|^s$ with $s \geq1$ but small. However, when following the arguments that lead to this estimate (see \cite[formula (3.40)]{MR3225501}) one realizes that a slight inaccuracy takes place. Namely, the authors claim that, for $s<n/(n-1)$,
\[
 h_{\T}^{3-n/s}|\log h_{\T}| \lesssim h_{\T}^2 |\log h_{\T}|.
\]
However, $3-n/s<4-n$ which, for $n=2$ or $n=3$ reduces to the estimates that we obtained in Theorem~\ref{th:errd}. 
\end{remark}

To conclude, we present an error estimate for the state variable.

\begin{corollary}[rates of convergence]
In the setting of Theorem~\ref{th:errd} we have
\[
 \|\ysf- \ysf_h   \|_{L^2(\Omega)} \lesssim \sigma_\T 
 \left( \| \bu \|_{\R^l} + \| \psf \|_{H^2(\Omega)} +\| \psf \|_{W^{2,r}(\Omega_1)} + \| \GRAD \ysf \|_{L^2(\varpi,\Omega)} \right).
%  \left( \| \bu \|_{\R^l} + \HA{\| \psf \|_{W^{2,r}(\Omega)}} \right),
\]
The hidden constant is independent of $\T$ and the continuous and discrete optimal pairs.
\end{corollary}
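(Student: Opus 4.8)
The plan is to control the state error $\|\ysf - \ysf_h\|_{L^2(\Omega)}$ by a triangle inequality through the intermediate state $\hat{\wbysf}_h$ defined in \eqref{eq:hatyh}, writing
\[
  \|\ysf - \ysf_h\|_{L^2(\Omega)} \leq \|\ysf - \hat{\wbysf}_h\|_{L^2(\Omega)} + \|\hat{\wbysf}_h - \ysf_h\|_{L^2(\Omega)}.
\]
The second term is already controlled inside the proof of Theorem~\ref{th:errd}: the energy estimate \eqref{eq:bu-bu_h2} gives $\|\hat{\wbysf}_h - \ysf_h\|_{L^2(\Omega)}^2 \lesssim \lambda \|\bu - \bu_h\|_{\R^l}^2 + (\text{projection terms})$, so combining with the rate for $\|\bu - \bu_h\|_{\R^l}$ already proven there, this difference is bounded by $\sigma_\T$ times the stated data norms. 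Thus the whole effort reduces to estimating the \emph{consistency} term $\|\ysf - \hat{\wbysf}_h\|_{L^2(\Omega)}$, where $\hat{\wbysf}_h$ is the Galerkin approximation in $\V(\T)$ of the state $\ysf$ driven by the \emph{continuous} optimal control $\usf$ (here $\bu$).

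Since $\ysf$ solves a Poisson problem with Dirac sources $\sum_{z\in\calD} u_z \delta_z$, it lives in the weighted space $H^1_0(\varpi,\Omega)$ rather than $H^1_0(\Omega)$, exactly the setting of Section~\ref{sub:analysisp}. The key input is the weighted finite element error estimate of \cite[Corollary 7.9]{NOS2}, which was already invoked in the proof of Corollary~\ref{col:errp} to yield bounds of order $\sigma_\T$ for the Galerkin approximation of problems with point data measured against $\|\GRAD\,\cdot\|_{L^2(\varpi,\Omega)}$. Applying that result directly to $\ysf$ and its Galerkin projection $\hat{\wbysf}_h$ gives
\[
  \|\ysf - \hat{\wbysf}_h\|_{L^2(\Omega)} \lesssim \sigma_\T \|\GRAD \ysf\|_{L^2(\varpi,\Omega)},
\]
which produces precisely the extra term $\|\GRAD \ysf\|_{L^2(\varpi,\Omega)}$ appearing on the right-hand side of the claimed estimate.

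I would then simply add the two contributions. The first piece contributes the data norms $\|\bu\|_{\R^l} + \|\psf\|_{H^2(\Omega)} + \|\psf\|_{W^{2,r}(\Omega_1)}$ scaled by the control rate, and since the control rate $h_\T^{2-\epsilon}$ (for $n=2$) or $h_\T^{1-\epsilon}$ (for $n=3$) is dominated by $\sigma_\T = h_\T^{2-n/2}|\log h_\T|$ for $h_\T$ small, all terms can be absorbed into a single $\sigma_\T$ prefactor; the second piece contributes $\sigma_\T\|\GRAD\ysf\|_{L^2(\varpi,\Omega)}$ as above. Collecting these yields the stated bound.

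The main obstacle, which is really only a bookkeeping point, is to verify that the rate from $\|\bu-\bu_h\|_{\R^l}$ is genuinely dominated by $\sigma_\T$ so that the final prefactor is uniform. One checks $h_\T^{2-\epsilon} \lesssim \sigma_\T$ for $n=2$ and $h_\T^{1-\epsilon} \lesssim \sigma_\T$ for $n=3$, both of which hold for $h_\T$ sufficiently small since $\sigma_\T$ carries the smaller power $2-n/2$ (equal to $1$ for $n=2$ and $\tfrac12$ for $n=3$) together with a logarithmic factor that beats any $h_\T^{-\epsilon}$. The only substantive analytic ingredient is the correct weighted-space estimate for $\|\ysf-\hat{\wbysf}_h\|_{L^2(\Omega)}$, and since $\ysf$ plays here exactly the role the adjoint played in the point-observation problem, \cite[Corollary 7.9]{NOS2} applies verbatim.
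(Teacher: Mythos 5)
Your proposal is correct and follows essentially the same route as the paper's proof: a triangle inequality through the auxiliary state $\hat{\wbysf}_h$ of \eqref{eq:hatyh}, the weighted estimate of \cite[Corollary 7.9]{NOS2} to get $\| \ysf - \hat{\wbysf}_h \|_{L^2(\Omega)} \lesssim \sigma_\T \| \GRAD \ysf \|_{L^2(\varpi,\Omega)}$, and \eqref{eq:bu-bu_h2} combined with the rates of Theorem~\ref{th:errd} for $\| \hat{\wbysf}_h - \ysf_h \|_{L^2(\Omega)}$. One small correction: \eqref{eq:bu-bu_h2} puts $\| \hat{\wbysf}_h - \ysf_h \|_{L^2(\Omega)}^2$ and $\lambda \| \bu - \bu_h \|_{\R^l}^2$ together on its left-hand side (and there are no projection terms here, since $\bbU^h = \bbU$), but as both terms are nonnegative your conclusion --- that this difference obeys the same rate as the control error and is therefore absorbed into the $\sigma_\T$ prefactor --- goes through unchanged.
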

\begin{proof}
A simple application of the triangle inequality yields
\begin{equation}
\label{eq:pointwise_y_2}
 \|\ysf- \ysf_h   \|_{L^2(\Omega)} \leq \|\ysf - \hat{\wbysf}_h \|_{L^2(\Omega)} + \| \hat{\wbysf}_h -  \ysf_h   \|_{L^2(\Omega)},
\end{equation}
where $\hat{\wbysf}_h$ solves \eqref{eq:hatyh}. To estimate the first term on the right-hand-side of the previous expression, we invoke \cite[Corollary 7.9]{NOS2} and arrive at
\[
  \| \ysf - \hat{\wbysf}_h \|_{L^2(\Omega)} \lesssim \sigma_\T \| \GRAD \ysf \|_{L^2(\varpi,\Omega)} . 
\]
Using \eqref{eq:bu-bu_h2} and the results of Theorem \ref{th:errd} we bound the second term on the right hand side of \eqref{eq:pointwise_y_2}. This concludes the proof.
\end{proof}

\section{Numerical Experiments}
\label{sec:NumExp}
In this section we conduct a series of numerical experiments that illustrate the performance of scheme \eqref{eq:Jdis}--\eqref{eq:ccdis} when is used to approximate the solution to the optimization problem with point observations studied in Section~\ref{sec:points} and the one with singular sources analyzed in Section~\ref{sec:delta}.

\subsection{Implementation}
All the numerical experiments that will be presented have been carried out with the help of a code that is implemented using \texttt{C++}. The matrices involved in the computations have been assembled exactly, while the right hand sides and the approximation error are computed by a quadrature formula which is exact for polynomials of degree 19 for two dimensional domains and degree 14 for three dimensional domains. The corresponding linear systems are solved using the multifrontal massively parallel sparse direct solver (MUMPS) \cite{MUMPS1,NUMPS2}. To solve the minimization problem \eqref{eq:Jdis}--\eqref{eq:ccdis} we use a Newton-type primal-dual active set strategy \cite[Section 2.12.4]{Tbook}.

For all our numerical examples we consider $\lambda=1$ and construct exact solutions based on the fundamental solutions for the Laplace operator:
\begin{equation}
\label{exact_adjoint}
\phi(x) = \begin{dcases}
                  -\frac{1}{2\pi}\sum_{z \in \frakS}\log|x-z|, & \textrm{if}~\Omega\subset\mathbb{R}^{2},\\
                  \frac{1}{4\pi}\sum_{z \in \frakS}\frac{1}{|x-z|}, & \textrm{if}~\Omega\subset\mathbb{R}^{3},
                \end{dcases}
\end{equation}
where, depending on the problem, $\frakS = \calZ$ or $\frakS = \calD$.

We must remark that the introduction of weights is only to simplify the analysis and that these are never used in the implementation. This greatly simplifies it and allows for the use of existing codes.

\subsection{Optimization with point observations: one point} We set $n=2$ and $\Omega=(0,1)^2$. We consider the control bounds that define the set $\mathbb{U}_{\mathcal{Z}}$ as $\asf=-0.4$ and $\bsf=-0.2$. To construct an exact solution to the optimization problem with point observations, we slightly modify the corresponding state equation by adding a forcing term $\fsf \in L^2(\Omega)$, \ie we replace \eqref{eq:statep} by the following problem:
\begin{equation}
\label{eq:statep_new}
\ysf \in H_0^1(\Omega): \qquad a(\ysf,w) = (\usf + \fsf,w) \quad \forall w \in H_0^1(\Omega). 
\end{equation}
We then define the exact optimal state, the observation set and the desired point value as follows:
\begin{equation*}
  \ysf(x_{1},x_{2}) = 32x_1x_2(1-x_1)(1-x_2), \qquad
  \calZ=\{(0.5,0.5)\}, \qquad \ysf_{(0.5,0.5)}=1.
\end{equation*}
The exact optimal adjoint state is given by \eqref{exact_adjoint} and the right hand side $\fsf$ is computed accordingly.

To present the performance of the fully discrete scheme \eqref{eq:Jdis}--\eqref{eq:ccdis}, we consider a family of quasi--uniform meshes $\{ \T_k \}_{k=1}^8$. We set $N(k) = \# \T_k$, that is, the total number of degrees of freedom. In addition, we denote by $\mathrm{EOC}_{\qsf}(k)$ the corresponding experimental order of convergence associated to the variable $\qsf$, which is computed using
\[
 \mathrm{EOC}(k) = \frac{\ln \left(e_{\qsf}(k-1)/e_{\qsf}(k) \right)}{\ln \left( N(k-1)/ N(k) \right)},
\]
where $e_{\qsf}(k)$ denotes the resulting error in the approximation of the control variable $\qsf$ and $k \in \{2,\cdots,8\}$.
\begin{table}[!h]
  \begin{center}
    \begin{tabular}{||c||c||c||c||c||}
      \hline
      DOFs          &  $\| \usf - \usf_{\T_k}\|_{L^2(\Omega)}$ & $\mathrm{EOC}_{\usf}$ & $\| \ysf - \ysf_{\T_k}\|_{L^{\infty}(\Omega)}$ & $\mathrm{EOC}_{\ysf}$\\
      \hline
      42  &  0.0456202    & --    &  0.3940558  & -- \\
      \hline
      146  & 0.0259039   & -0.4542396 &   0.1220998   & -0.9403796 \\
      \hline
      546  & 0.0106388  & -0.6746618   &  0.0356279& -0.9338121  \\
      \hline
      2114 & 0.0053128  & -0.5129453    &  0.0104755 & -0.9042427 \\
      \hline
      8322 & 0.0026798 & -0.4994327      & 0.0030256 & -0.9063059\\
      \hline
      33026  & 0.0013372 & -0.5043272     & 0.0008921 & -0.8860222\\
      \hline
      131586  & 0.0006675 & -0.5025385  &  0.0002586 & -0.8957802\\
      \hline
      525314  & 0.0003340 & -0.5000704  &  7.359881e-05 & -0.9077666\\
      \hline
    \end{tabular} 
  \end{center}
\caption{Experimental order of convergence of scheme \eqref{eq:Jdis}--\eqref{eq:ccdis} when is used to approximate the solution to the optimization problem of Section~\ref{sec:points} with one observation point. The $\mathrm{EOC}_{\usf}$ is in agreement with estimate \eqref{eq:tracking_error_estimates_2d} of Theorem~\ref{th:tracking_error_estimates}: the family $\{\T_{k} \}$ is quasi-uniform and, thus, $h_{\T_k} \approx N(k)^{-1/2}$, which is what we observe. The $\mathrm{EOC}_{\ysf}$ reveals a quadratic order and illustrates that our error estimate \eqref{eq:tracking_error_estimates_2d_state} might be pessimistic.}
\label{table:example1_tracking}
\end{table}

Table~\ref{table:example1_tracking} shows that, when approximating the optimal control variable, the $\mathrm{EOC}_{\usf}$ is in agreement with the estimate \eqref{eq:tracking_error_estimates_2d}. This illustrates the sharpness of the derived estimate up to a logarithmic term. We comment that, since the family $\{ \T_k \}_{k=1}^8$ is quasi-uniform, we then have that $h_{\T_{k}} \approx N(k)^{-1/2}$. Consequently, \eqref{eq:tracking_error_estimates_2d} reads as follows:
\begin{equation}
\label{eq:error_DOFs}
 \| \usf - \usf_{\T_k}\|_{L^2(\Omega)} \lesssim N(k)^{-\tfrac{1}{2}} |\log N(k)|.
%  \qquad
%  \| \ysf - \ysf_{\T_k}\|_{L^2(\Omega)} \lesssim ( 1 + \log N )^{\frac{1}{2}} N^{-1}.
\end{equation}
Table \ref{table:example1_tracking} also presents the $\mathrm{EOC}_{\ysf}$ obtained for the approximation of the optimal state variable $\ysf$: $h_{\T_k}^2 \approx N(k)^{-1}$. These results show that the derived error estimate \eqref{eq:tracking_error_estimates_2d_state} might be pessimistic.

\subsection{Optimization with point observations: four points} The objective of this numerical experiment is to test the performance of the fully discrete scheme \eqref{eq:Jdis}--\eqref{eq:ccdis} when more observation points are considered.

Let us consider $n=2$ and $\Omega = (0,1)^2$. The control bounds defining the set $\mathbb{U}_{\mathcal{Z}}$ are given by $\asf = -1.2$ and $\bsf = -0.7$. The state equation \eqref{eq:statep} is replaced by \eqref{eq:statep_new}, which allows the incorporation of a forcing term $\fsf$. We set
\[
 \calZ=\{(0.75,0.75),(0.75,0.25),(0.25,0.75),(0.25,0.25)\},
\]
with corresponding desired values
\[
 \ysfd_{(0.75,0.75)}=1,\qquad \ysfd_{(0.25,0.25)}=1,\qquad \ysfd_{(0.75,0.25)}=0.5, \qquad \ysfd_{(0.25,0.75)}=0.5.
\]
The exact optimal state variable is then given by
\[
 \ysf(x_{1},x_{2})=2.75-2x_1-2x_2+4x_1x_2
\]
and the exact optimal adjoint state is given by \eqref{exact_adjoint}.

\begin{table}[!h]
  \begin{center}
    \begin{tabular}{||c||c||c||c||c||}
      \hline
      DOFs          &  $\| \usf - \usf_{\T_k}\|_{L^2(\Omega)}$ & EOC &  $\| \ysf - \ysf_{\T_k} \|_{L^{\infty}(\Omega)}$  & EOC \\
      \hline
      42  &  0.0285416   & --         & 0.0595256 & -- \\
      \hline
      146  & 0.0285084  & -0.0009357 & 0.0152388 &  -1.0936039   \\
      \hline
      546  & 0.0208153  & -0.2384441 &  0.0039226  & -1.0288683 \\
      \hline
      2114 & 0.0116163  & -0.4308717 &  0.0010313  & -0.9868631  \\
      \hline
      8322 & 0.0061821 & -0.4602926 &  0.0002708 & -0.9758262    \\
      \hline
      33026  & 0.0030792 & -0.5056447   &  7.057710e-05 & -0.9755383 \\
      \hline 
      131586  & 0.0014908 & -0.5247299  & 1.729492e-05  & -1.0173090\\
      \hline
      525314  & 0.0007618 & -0.4849766  & 4.503108e-06 & -0.9720511\\
      \hline
    \end{tabular} 
  \end{center}
\caption{Experimental order of convergence of scheme \eqref{eq:Jdis}--\eqref{eq:ccdis} when is used to approximate the solution of the problem of Section~\ref{sec:points} with four observation points. The $\mathrm{EOC}_{\usf}$ is in agreement with estimate \eqref{eq:tracking_error_estimates_2d} of Theorem~\ref{th:tracking_error_estimates}: the family $\{\T_{k} \}$ is quasi-uniform, so that $h_{\T_k} \approx N(k)^{-1/2}$, which is what we observe. The $\mathrm{EOC}_{\ysf}$ reveals a quadratic order and illustrates that our error estimate \eqref{eq:tracking_error_estimates_2d_state} might be pessimistic.}
\label{table:example2_tracking}
\end{table}

Table~\ref{table:example2_tracking} shows that the $\mathrm{EOC}_{\usf}$ is in agreement with estimate \eqref{eq:tracking_error_estimates_2d} of Theorem~\ref{th:tracking_error_estimates}. This illustrates the robustness of scheme \eqref{eq:Jdis}--\eqref{eq:ccdis} when more observations points are considered.

\subsection{Optimization with point observations: a three dimensional example} We set $n=3$ and $\Omega = (0,1)^3$. We define $\asf=-15$ and $\bsf=-5$. The optimal state is
\[
  \ysf(x_{1},x_{2},x_{3})=\frac{8192}{27}x_1 x_2 x_3(1-x_1)(1-x_2)(1-x_3),
\]
whereas the optimal adjoint state is defined by \eqref{exact_adjoint}. The set of observation points is 
\begin{align*}
  \calZ = &\left\{(0.25,0.25,0.25),(0.25,0.25,0.75),(0.25,0.75,0.25),(0.25,0.75,0.75), \right. \\
  &\left. (0.75,0.25,0.25),(0.75,0.25,0.75),(0.75,0.75,0.25),(0.75,0.75,0.25) \right\}
\end{align*}
and we set $\ysf_z=1$ for all $z \in \calZ$.

\begin{table}[!h]
  \begin{center}
    \begin{tabular}{||c||c||c||}
      \hline
      DOFs          &  $\| \usf - \usf_{\T_k}\|_{L^2(\Omega)}$   & $\mathrm{EOC}_{\usf}$ 
%       & $\| \ysf - \ysf_{\T_k}\|_{L^2(\Omega)}$ & $\mathrm{EOC}_{\ysf}$ 
\\
      \hline
      1419  &  0.0274726   & --     \\
      \hline
      3694 & 0.0199406 & -0.3349167  \\
      \hline
      9976  & 0.0120137  & -0.5100352  \\
      \hline
      27800 & 0.0088690  & -0.2961201 \\
      \hline
      79645 & 0.0067903 & -0.2537367  \\
      \hline
      234683  &  0.0049961 & -0.2839348  \\
      \hline 
      704774  & 0.0036908 & -0.2753755 \\
      \hline
      2155291  & 0.0026540 & -0.2950207\\
      \hline
    \end{tabular} 
  \end{center}
\caption{Experimental order of convergence of scheme \eqref{eq:Jdis}--\eqref{eq:ccdis} when used to approximate the solution to the optimization problem of Section~\ref{sec:points} in a three dimensional example. The $\mathrm{EOC}_{\usf}$ is in agreement with estimate \eqref{eq:tracking_error_estimates_3d} of Theorem~\ref{th:tracking_error_estimates}: the family $\{\T_{k} \}$ is quasi-uniform and then $h_{\T_k} \approx N(k)^{-1/3}$.}
\label{table:example3_tracking}
\end{table}
Table~\ref{table:example3_tracking} illustrates that $\mathrm{EOC}_{\usf}$ is in agreement with estimate \eqref{eq:tracking_error_estimates_3d} of Theorem \ref{th:tracking_error_estimates} due to the fact that the family $\{\T_{k} \}$ is quasi-uniform and then $h_{\T_k} \approx \mathrm{DOFs}(k)^{-1/3}$.

\subsection{Optimization with singular sources} 
We now explore the performance of scheme \eqref{eq:Jdis}--\eqref{eq:ccdis} when is used to solve the optimization problem with singular sources. We set $n=2$ and $\Omega = (0,1)^2$. We consider $\mathcal{D} = (0.5, 0.5)$ and the control bounds that define the set $\mathbb{U}_{\delta}$ are $\asf = 0.3$ and $\bsf = 0.7$. The desired state and the exact adjoint state correspond to
\[
 \psf(x_1,x_2) = -32x_1x_2(1-x_1)(1-x_2), \qquad \ysf_d = -\sin(2\pi x)\cos(2\pi x)
\]
The exact optimal state is given by \eqref{exact_adjoint}.

\begin{table}[!h]
  \begin{center}
    \begin{tabular}{||c||c||c||}
      \hline
      DOFs   &  $\| \usf - \usf_{\T_k}\|_{L^2(\Omega)}$ & EOC \\
      \hline
      30  &  0.0940682   & --  \\       
      \hline
      86  & 0.0536485  & -0.5332256 \\
      \hline
      294  & 0.0207101  & -0.7743303 \\
      \hline
      1094 & 0.0068950  & -0.8369949\\
      \hline
      4230 & 0.0021408 & -0.8648701   \\
      \hline
      16646  & 0.0006380 & -0.8836678   \\
      \hline 
      66054 & 0.0001850 & -0.8981934  \\
      \hline
      263174 & 5.259841e-05 & -0.9098104\\
      \hline
       1050630 & 1.472536e-05 & -0.9196613 \\
      \hline
    \end{tabular} 
  \end{center}
\caption{Experimental order of convergence of scheme \eqref{eq:Jdis}--\eqref{eq:ccdis} when used to approximate the solution to the optimization problem with point sources of Section \ref{sec:delta}. The $\mathrm{EOC}_{\usf}$ reveals a quadratic order and illustrates our error estimate \eqref{eq:delta_2d}. }
\label{table:example1_sources}
\end{table}

\section*{Acknowledgements}
The authors would like to thank Johnny Guzm\'an for fruitful discussions regarding pointwise estimates and the regularity of elliptic problems in convex, polytopal domains. 
We would like to also thank Alejandro Allendes for his technical support.
\bibliographystyle{plain}
\bibliography{biblio}

\def\cprime{$'$} \def\cprime{$'$} \def\cprime{$'$} \def\cprime{$'$}
  \def\cprime{$'$} \def\cprime{$'$}
\begin{thebibliography}{10}

\bibitem{MR2137495}
Y.~Achdou.
\newblock An inverse problem for a parabolic variational inequality arising in
  volatility calibration with {A}merican options.
\newblock {\em SIAM J. Control Optim.}, 43(5):1583--1615, 2005.

\bibitem{MR3264365}
J.P. Agnelli, E.M. Garau, and P.~Morin.
\newblock {\it {A} posteriori} error estimates for elliptic problems with
  {D}irac measure terms in weighted spaces.
\newblock {\em ESAIM Math. Model. Numer. Anal.}, 48(6):1557--1581, 2014.

\bibitem{MUMPS1}
P.R. Amestoy, I.S. Duff, and J.-Y. L'Excellent.
\newblock Multifrontal parallel distributed symmetric and unsymmetric solvers.
\newblock {\em Comput. Methods in Appl. Mech. Eng.}, 184(2‚Äì4):501 -- 520,
  2000.

\bibitem{NUMPS2}
P.R. Amestoy, I.S. Duff, J.-Y. L'Excellent, and J.~Koster.
\newblock A fully asynchronous multifrontal solver using distributed dynamic
  scheduling.
\newblock {\em SIAM J. Matrix Anal. Appl.}, 23(1):15--41 (electronic), 2001.

\bibitem{AO}
H.~Antil and E.~Ot{\'a}rola.
\newblock A {FEM} for an optimal control problem of fractional powers of
  elliptic operators.
\newblock {\em SIAM J. Control Optim.}, 53(6):3432--3456, 2015.

\bibitem{AOS}
H.~Antil, E.~Ot\'arola, and A.J. Salgado.
\newblock A space-time fractional optimal control problem: analysis and
  discretization.
\newblock {\em SIAM J. Control Optim.}, 2016.
\newblock (to appear).

\bibitem{MR2086168}
A.~Berm{\'u}dez, P.~Gamallo, and R.~Rodr{\'{\i}}guez.
\newblock Finite element methods in local active control of sound.
\newblock {\em SIAM J. Control Optim.}, 43(2):437--465, 2004.

\bibitem{BrennerScott}
S.C. Brenner and L.R. Scott.
\newblock {\em The mathematical theory of finite element methods}, volume~15 of
  {\em Texts in Applied Mathematics}.
\newblock Springer, New York, third edition, 2008.

\bibitem{BrettElliott}
C.~Brett, A.~Dedner, and C.~Elliott.
\newblock Optimal control of elliptic {PDE}s at points.
\newblock {\em IMA J. Numer. Anal.}, 2015.
\newblock DOI:10.1093/imanum/drv040.

\bibitem{BEHL}
C.~Brett, C.M. Elliott, M.~Hinterm{\"u}ller, and C.~L{\"o}bhard.
\newblock Mesh adaptivity in optimal control of elliptic variational
  inequalities with point-tracking of the state.
\newblock {\em Interfaces Free Bound.}, 17(1):21--53, 2015.

\bibitem{MR1306580}
J.A. Burns and K.~Ito.
\newblock On well-posedness of integro-differential equations in weighted
  {$L^2$}-spaces.
\newblock {\em Differential Integral Equations}, 8(3):627--646, 1995.

\bibitem{CS:07}
L.~Caffarelli and L.~Silvestre.
\newblock An extension problem related to the fractional {L}aplacian.
\newblock {\em Comm. Part. Diff. Eqs.}, 32(7-9):1245--1260, 2007.

\bibitem{MR2373460}
P.~Cannarsa, P.~Martinez, and J.~Vancostenoble.
\newblock Carleman estimates for a class of degenerate parabolic operators.
\newblock {\em SIAM J. Control Optim.}, 47(1):1--19, 2008.

\bibitem{Casas:85}
E.~Casas.
\newblock {$L^2$} estimates for the finite element method for the {D}irichlet
  problem with singular data.
\newblock {\em Numer. Math.}, 47(4):627--632, 1985.

\bibitem{MR2974716}
E.~Casas, C.~Clason, and K.~Kunisch.
\newblock Approximation of elliptic control problems in measure spaces with
  sparse solutions.
\newblock {\em SIAM J. Control Optim.}, 50(4):1735--1752, 2012.

\bibitem{MR2780884}
A.C. Cavalheiro.
\newblock A theorem on global regularity for solutions of degenerate elliptic
  equations.
\newblock {\em Commun. Math. Anal.}, 11(2):112--123, 2011.

\bibitem{MR3449612}
L.~Chang, W.~Gong, and N.~Yan.
\newblock Numerical analysis for the approximation of optimal control problems
  with pointwise observations.
\newblock {\em Math. Methods Appl. Sci.}, 38(18):4502--4520, 2015.

\bibitem{MR3171770}
R.~Du.
\newblock Approximate controllability of a class of semilinear degenerate
  systems with boundary control.
\newblock {\em J. Differential Equations}, 256(9):3141--3165, 2014.

\bibitem{MR1800316}
J.~Duoandikoetxea.
\newblock {\em Fourier analysis}, volume~29 of {\em Graduate Studies in
  Mathematics}.
\newblock American Mathematical Society, Providence, RI, 2001.

\bibitem{Guermond-Ern}
A.~Ern and J.-L. Guermond.
\newblock {\em Theory and practice of finite elements}, volume 159 of {\em
  Applied Mathematical Sciences}.
\newblock Springer-Verlag, New York, 2004.

\bibitem{FKS:82}
E.B. Fabes, C.E. Kenig, and R.P. Serapioni.
\newblock The local regularity of solutions of degenerate elliptic equations.
\newblock {\em Comm. Part. Diff. Eqs.}, 7(1):77--116, 1982.

\bibitem{MR3064275}
R.H. Fabiano.
\newblock A semidiscrete approximation scheme for neutral delay-differential
  equations.
\newblock {\em Int. J. Numer. Anal. Model.}, 10(3):712--726, 2013.

\bibitem{MR2018123}
R.H. Fabiano and J.~Turi.
\newblock Making the numerical abscissa negative for a class of neutral
  equations.
\newblock {\em Discrete Contin. Dyn. Syst.}, (suppl.):256--262, 2003.
\newblock Dynamical systems and differential equations (Wilmington, NC, 2002).

\bibitem{MR3268059}
M.~Fornasier, B.~Piccoli, and F.~Rossi.
\newblock Mean-field sparse optimal control.
\newblock {\em Philos. Trans. R. Soc. Lond. Ser. A Math. Phys. Eng. Sci.},
  372(2028):20130400, 21, 2014.

\bibitem{MR1156467}
S.J. Fromm.
\newblock Potential space estimates for {G}reen potentials in convex domains.
\newblock {\em Proc. Amer. Math. Soc.}, 119(1):225--233, 1993.

\bibitem{ACV}
C.R. Fuller, P.A. Nelson, and S.J. Elliott.
\newblock {\em Active control of vibration}.
\newblock Academic Press, London, San Diego, 1996.

\bibitem{GH:09}
P.~Gamallo and E.~Hern{\'a}ndez.
\newblock Error estimates for the approximation of a class of optimal control
  systems governed by linear {PDE}s.
\newblock {\em Numer. Funct. Anal. Optim.}, 30(5-6):523--547, 2009.

\bibitem{MR2983016}
W.~Gong.
\newblock Error estimates for finite element approximations of parabolic
  equations with measure data.
\newblock {\em Math. Comp.}, 82(281):69--98, 2013.

\bibitem{MR3150173}
W.~Gong, M.~Hinze, and Z.~Zhou.
\newblock A priori error analysis for finite element approximation of parabolic
  optimal control problems with pointwise control.
\newblock {\em SIAM J. Control Optim.}, 52(1):97--119, 2014.

\bibitem{MR3225501}
W.~Gong, G.~Wang, and N.~Yan.
\newblock Approximations of elliptic optimal control problems with controls
  acting on a lower dimensional manifold.
\newblock {\em SIAM J. Control Optim.}, 52(3):2008--2035, 2014.

\bibitem{Grisvard}
P.~Grisvard.
\newblock {\em Elliptic problems in nonsmooth domains}, volume~24 of {\em
  Monographs and Studies in Mathematics}.
\newblock Pitman (Advanced Publishing Program), Boston, MA, 1985.

\bibitem{MR3227458}
M.~Gueye.
\newblock Exact boundary controllability of 1-{D} parabolic and hyperbolic
  degenerate equations.
\newblock {\em SIAM J. Control Optim.}, 52(4):2037--2054, 2014.

\bibitem{MR2495783}
J.~Guzm{\'a}n, D.~Leykekhman, J.~Rossmann, and A.H. Schatz.
\newblock H\"older estimates for {G}reen's functions on convex polyhedral
  domains and their applications to finite element methods.
\newblock {\em Numer. Math.}, 112(2):221--243, 2009.

\bibitem{MR2718176}
E.~Hern{\'a}ndez, D.~Kalise, and E.~Ot{\'a}rola.
\newblock Numerical approximation of the {LQR} problem in a strongly damped
  wave equation.
\newblock {\em Comput. Optim. Appl.}, 47(1):161--178, 2010.

\bibitem{MR2525606}
E.~Hern{\'a}ndez and E.~Ot{\'a}rola.
\newblock A locking-free {FEM} in active vibration control of a {T}imoshenko
  beam.
\newblock {\em SIAM J. Numer. Anal.}, 47(4):2432--2454, 2009.

\bibitem{MR2536481}
M.~Hinterm{\"u}ller and A.~Laurain.
\newblock Electrical impedance tomography: from topology to shape.
\newblock {\em Control Cybernet.}, 37(4):913--933, 2008.

\bibitem{Hinze:05}
M.~Hinze.
\newblock A variational discretization concept in control constrained
  optimization: the linear-quadratic case.
\newblock {\em Comput. Optim. Appl.}, 30(1):45--61, 2005.

\bibitem{MR2516528}
M.~Hinze, R.~Pinnau, M.~Ulbrich, and S.~Ulbrich.
\newblock {\em Optimization with {PDE} constraints}, volume~23 of {\em
  Mathematical Modelling: Theory and Applications}.
\newblock Springer, New York, 2009.

\bibitem{MR2441683}
K.~Ito and K.~Kunisch.
\newblock {\em Lagrange multiplier approach to variational problems and
  applications}, volume~15 of {\em Advances in Design and Control}.
\newblock Society for Industrial and Applied Mathematics (SIAM), Philadelphia,
  PA, 2008.

\bibitem{MR1135762}
K.~Ito and J.~Turi.
\newblock Numerical methods for a class of singular integro-differential
  equations based on semigroup approximation.
\newblock {\em SIAM J. Numer. Anal.}, 28(6):1698--1722, 1991.

\bibitem{JK:95}
D.~Jerison and C.E. Kenig.
\newblock The inhomogeneous {D}irichlet problem in {L}ipschitz domains.
\newblock {\em J. Funct. Anal.}, 130(1):161--219, 1995.

\bibitem{KSbook}
D.~Kinderlehrer and G.~Stampacchia.
\newblock {\em An introduction to variational inequalities and their
  applications}, volume~88 of {\em Pure and Applied Mathematics}.
\newblock Academic Press, Inc. [Harcourt Brace Jovanovich, Publishers], New
  York-London, 1980.

\bibitem{MR3071172}
D.~Leykekhman, D.~Meidner, and B.~Vexler.
\newblock Optimal error estimates for finite element discretization of elliptic
  optimal control problems with finitely many pointwise state constraints.
\newblock {\em Comput. Optim. Appl.}, 55(3):769--802, 2013.

\bibitem{MR3116646}
D.~Leykekhman and B.~Vexler.
\newblock Optimal a priori error estimates of parabolic optimal control
  problems with pointwise control.
\newblock {\em SIAM J. Numer. Anal.}, 51(5):2797--2821, 2013.

\bibitem{MR0271512}
J.-L. Lions.
\newblock {\em Optimal control of systems governed by partial differential
  equations.}
\newblock Die Grundlehren der mathematischen Wissenschaften, Band 170.
  Springer-Verlag, New York-Berlin, 1971.

\bibitem{JCarlos}
J.C.~De los Reyes.
\newblock {\em Numerical {PDE}-constrained optimization}.
\newblock Springer, 2015.

\bibitem{MR1143406}
V.G. Maz{\cprime}ya and J.~Rossmann.
\newblock On the {A}gmon-{M}iranda maximum principle for solutions of elliptic
  equations in polyhedral and polygonal domains.
\newblock {\em Ann. Global Anal. Geom.}, 9(3):253--303, 1991.

\bibitem{ACS}
P.A. Nelson and S.J. Elliott.
\newblock {\em Active control of sound}.
\newblock Academic press, London, San Diego, 1992.

\bibitem{NOS}
R.H. Nochetto, E.~Ot{\'a}rola, and A.J. Salgado.
\newblock A {PDE} approach to fractional diffusion in general domains: a priori
  error analysis.
\newblock {\em Found. Comput. Math.}, 15(3):733--791, 2015.

\bibitem{NOS2}
R.H. Nochetto, E.~Ot{\'a}rola, and A.J. Salgado.
\newblock Piecewise polynomial interpolation in {M}uckenhoupt weighted
  {S}obolev spaces and applications.
\newblock {\em Numer. Math.}, 132(1):85--130, 2016.

\bibitem{MR3072225}
K.~Pieper and B.~Vexler.
\newblock A priori error analysis for discretization of sparse elliptic optimal
  control problems in measure space.
\newblock {\em SIAM J. Control Optim.}, 51(4):2788--2808, 2013.

\bibitem{MR645661}
R.~Rannacher and R.~Scott.
\newblock Some optimal error estimates for piecewise linear finite element
  approximations.
\newblock {\em Math. Comp.}, 38(158):437--445, 1982.

\bibitem{MR2193509}
R.~Rannacher and B.~Vexler.
\newblock A priori error estimates for the finite element discretization of
  elliptic parameter identification problems with pointwise measurements.
\newblock {\em SIAM J. Control Optim.}, 44(5):1844--1863, 2005.

\bibitem{MR0431753}
A.~H. Schatz and L.~B. Wahlbin.
\newblock Interior maximum norm estimates for finite element methods.
\newblock {\em Math. Comp.}, 31(138):414--442, 1977.

\bibitem{MR637283}
A.H. Schatz and L.B. Wahlbin.
\newblock On the quasi-optimality in {$L_{\infty }$} of the {$\dot
  H^{1}$}-projection into finite element spaces.
\newblock {\em Math. Comp.}, 38(157):1--22, 1982.

\bibitem{Scott:73}
L.R. Scott.
\newblock Finite element convergence for singular data.
\newblock {\em Numer. Math.}, 21:317--327, 1973/74.

\bibitem{MR2995178}
T.~Seidman, M.~Gobbert, D.~Trott, and M.~Kru{\v{z}}{\'{\i}}k.
\newblock Finite element approximation for time-dependent diffusion with
  measure-valued source.
\newblock {\em Numer. Math.}, 122(4):709--723, 2012.

\bibitem{Tbook}
F.~Tr{\"o}ltzsch.
\newblock {\em Optimal Control of Partial Differential Equations}.
\newblock AMS, 2010.

\bibitem{Turesson}
B.O. Turesson.
\newblock {\em Nonlinear potential theory and weighted {S}obolev spaces}.
\newblock Springer, 2000.

\bibitem{MR1844451}
A.~Unger and F.~Tr{\"o}ltzsch.
\newblock Fast solution of optimal control problems in the selective cooling of
  steel.
\newblock {\em ZAMM Z. Angew. Math. Mech.}, 81(7):447--456, 2001.

\end{thebibliography}

\end{document}